\newcommand{\CC}{{\mathbb{C}}}
\newcommand{\NN}{{\mathbb{N}}}
\newcommand{\RR}{{\mathbb{R}}}
\newcommand{\TT}{{\mathbb{T}}}
\newcommand{\ZZ}{{\mathbb{Z}}}
\newcommand{\cF}{{\mathcal{F}}}
\newcommand{\cS}{{\mathcal{S}}}
\newcommand{\cQ}{{\mathcal{Q}}}
\newcommand{\Ideal}[1]{{\left\langle #1 \right\rangle}}
\newcommand{\Span}{{\mathop{\rm span}\,}}
\newcommand{\rank}{{\mathop{\rm rank}\,}}
\newcommand{\diag}{{\mathop{\rm diag}\,}}
\newtheorem{theorem}{Theorem}[section]
\newtheorem{proposition}[theorem]{Proposition}
\newtheorem{corollary}[theorem]{Corollary}
\theoremstyle{definition}
\newtheorem{definition}[theorem]{Definition}
\theoremstyle{remark}
\newtheorem{remark}[theorem]{Remark}
\numberwithin{equation}{section}
\begin{document}

\title[Hankel operators of finite rank and Prony's problem]{Hankel and
  Toeplitz operators of finite rank and Prony's problem in several
  variables} 


\author{Tomas Sauer}
\address{Lehrstuhl f\"ur Mathematik mit Schwerpunkt Digitale
  Bildverarbeitung \& FORWISS, University of Passau, Fraunhofer IIS
  Research Group for Knowledge Based Image Processing, Innstr. 43,
  D--94032 Passau, Germany}
\email{Tomas.Sauer@uni-passau.de}


\subjclass[2010]{Primary 65D05, 47B35, 13P10}

\date{May 21, 2018}

\dedicatory{}

\commby{}

\begin{abstract}
  Prony's problem in several variables has attracted some attention
  recently and provides an interesting combination of polynomial ideal
  theory with analytic and numeric computations. This note points out
  further connections to Hankel operators of finite rank as they
  appear in multidimensional moment problems, shift invariance signal
  spaces, annihilating ideals of filters and factorization of the Hankel
  matrices and operators by means of Vandermonde matrices. In fact, it
  turns out that these concepts are essentially equivalent.
\end{abstract}

\maketitle

\section{Introduction}
\label{sec:Intro}
In 1795, Prony \cite{prony95:_essai} gave an ingenious trick to
recover an exponential sum
$$
f(x) := \sum_{j=1}^n f_j \, \rho_j^x, \qquad f_j, \, \rho_j \in \CC,
$$
from $2n+1$ consecutive integer samples $f(0),\dots,f(2n)$. His method
consists of finding a nontrivial solution $p \in \CC^{n+1}$ of the
homogeneous problem
\begin{equation}
  \label{eq:PronyHomEq}
  \begin{pmatrix}
    f(0) & \dots & f(n) \\
    \vdots & \ddots & \vdots \\
    f(n) & \dots & f(2n)
  \end{pmatrix} \,
  \begin{pmatrix}
    p_0 \\ \vdots \\ p_n
  \end{pmatrix}
  = 0,
\end{equation}
whose associated \emph{Prony polynomial} $p(x) = p_0 + p_1 \, x +
\cdots + p_n \, x^{n}$ has the zeros $\rho_1,\dots,\rho_n$, which
recovers the nonlinear part of $f$; the coefficients $f_j$ can be
found by solving a linear system, cf. \cite{plonka14:_prony}. Emerging
from the classical MUSIC \cite{schmidt86:_multip} and ESPRIT
\cite{roy89:_esprit} algorithms, the numerical behavior of Prony and
Prony-like methods and their relationship to techniques from Numerical
Linear Algebra have been studied carefully,
see, for example, \cite{potts15:_fast_esprit} and the references there.

The multivariate version of Prony's problem has been considered only
recently, with first attempts given in
\cite{kunis16:_prony}, mostly motivated by the connections to
\emph{superresolution}. For the formulation of Prony's problem in $s$
variables, we follow the nowadays popular fashion to write it as an
\emph{exponential reconstruction problem}, i.e., the reconstruction of
a function of the form
\begin{equation}
  \label{eq:PronyExponential}
  f(x) = \sum_{\omega \in \Omega}^n f_\omega \, e^{\omega^T x}, \qquad
  f_\omega \in \CC, \qquad \Omega \subset \left( \RR + i \TT \right)^s,
  \quad \# \Omega < \infty,
\end{equation}
where $\TT = \RR / (2\pi \ZZ)$ denotes the \emph{torus}. We will see
later that for a complete theory the coefficients $f_j$ have to be
chosen from $\Pi = \CC [z]$, the ring of all polynomials in $s$ variables.

The relation to Hankel operators is obvious: the matrix in
(\ref{eq:PronyHomEq}) is a Hankel matrix. Since the situation is
more intricate in the multivariate case, we will define
(generalized) Hankel matrices in a more generous way. To that end, we
let $\ell_0 (\ZZ^s)$ 
denote the space of all sequences $f : \ZZ^s \to \CC$ such that the
``$0$--norm''
$$
\| f \|_0 := \# \left\{ \alpha \in \ZZ^s : f(\alpha) \neq 0 \right\}
$$
is finite. Then, for $A,B \subset \ZZ^s$, $\#A, \# B < \infty$, the
(generalized) \emph{Hankel matrix} is defined as
\begin{equation}
  \label{eq:HankelDef}
  H_{A,B} (f) :=
  \begin{pmatrix}
    f(\alpha+\beta) :
    \begin{array}{c}
      \alpha \in A \\ \beta \in B
    \end{array}
  \end{pmatrix}
  \in \CC^{A \times B}, \qquad f \in \ell_0 (\ZZ^s).
\end{equation}
It is common to only admit $A,B \in \NN_0^s$ for Hankel
matrices and we will see that for our purpose here this makes no
difference. In the same
way, a \emph{Toeplitz matrix} can be defined as
\begin{equation}
  \label{eq:ToelitzDef}
  T_{A,B} (f) :=
  \begin{pmatrix}
    f(\alpha-\beta) :
    \begin{array}{c}
      \alpha \in A \\ \beta \in B
    \end{array}
  \end{pmatrix}
  \in \CC^{A \times B}, \qquad f \in \ell_0 (\ZZ^s).
\end{equation}
Both matrices depend on finitely many values of $f$ on the subsets $A+B$
and $A-B$ of $\ZZ^s$.

\begin{remark}
  (\ref{eq:HankelDef}) and (\ref{eq:ToelitzDef}) are a slightly
  nonstandard way to
  index matrices, but it is the one that captures the structure of
  this matrix. Clearly, $H_{A,B} (f)$ can be written as a conventional
  matrix by
  ordering the multiindices, for example with respect to the graded
  lexicographical ordering, but the resulting matrix is neither a
  Hankel matrix nor does it have any visible structure at all.
\end{remark}

\noindent
The most prominent occurrence of Hankel matrices is probably in the
context of \emph{moment problems},
cf. \cite{schmuedgen17:_momen_probl}, where the Hankel matrix formed
from the moment sequence $\mu (\alpha) = \int x^\alpha \, d\mu$
reveals information about the underlying measure $d\mu$. Also in this
case, multivariate Hankel matrices are naturally multiindexed.

In \cite{Sauer17_Prony} it has been shown that Prony's method
generalizes naturally to several variables if one takes into account
two major points: one has to choose a set $A \in \ZZ^s$ such that
$(\cdot)^A := \Span \{ (\cdot)^\alpha : \alpha \in A \}$ allows for
interpolation at $e^\Omega = \{ e^\omega : \omega \in \Omega \} \subset
\CC^s$ and find some $k$ such that
$$
\rank H_{A,\Gamma_k} (f) = \rank H_{A,\Gamma_{k+1}} (f), \qquad
\Gamma_k := \left\{ \alpha \in \NN_0^s : |\alpha| \le k \right\}.
$$
Then the solutions $h \in \CC^{\Gamma_{k+1}}$ of $H_{A,\Gamma_{k+1}}
(f) \, h = 0$ generate, interpreted as polynomials, the so called
\emph{Prony ideal}, a zero dimensional ideal whose associated variety
equals $e^\Omega$. Moreover, it is shown in \cite{Sauer17_Prony} how
this set can be determined using methods from Numerical Linear
Algebra; \cite{Sauer18:_prony} gives a symbolic method and a canonical
way to determine $A$ if only $\# \Omega$ is known.

In the context of this paper, we consider $H_{A,B} (f)$ as the
\emph{restriction} of the \emph{Hankel operator} $H (f) : \ell_0
(\ZZ^s) \to \ell (\ZZ^s)$ defined as the \emph{correlation}
\begin{equation}
  \label{eq:HankelOp}
  H(f) \, g = \sum_{\beta \in \ZZ^s} f( \cdot + \beta ) \, g(\beta) =:
  f \star g, \qquad g \in \ell_0 (\ZZ^s),
\end{equation}
while the respective \emph{Toeplitz operator} $T(f) \, g = f * g$ represents
the \emph{convolution}. The requirement $g \in \ell_0 (\ZZ^s)$ serves
the purpose of making (\ref{eq:HankelOp}) well--defined. We relate to
$f \in \ell_0 (\ZZ^s)$ its \emph{symbol}
$$
\hat f (z) := \sum_{\alpha \in \ZZ^s} f(\alpha) \, z^\alpha \in
\Lambda := \CC [z,z^{-1}],
$$
which is a \emph{Laurent polynomial} since the support of $f$ is
finite. By means of the translation operator $\tau$, defined by
$\tau_j f = f( \cdot + \epsilon_j )$, $j=1,\dots,s$, where
$\epsilon_j$ are the unit multiindices, and $\tau^\alpha =
\tau_1^{\alpha_1} \cdots \tau_s^{\alpha_s}$, we can rewrite
(\ref{eq:HankelOp}) as
$$
f \star g = \sum_{\beta \in \ZZ^s} \tau^\beta f (\cdot) \, g(\beta)
= \hat g (\tau) \, f =: (f,\hat g),
$$
introducing the bilinear mapping $(\cdot, \cdot) : \ell (\ZZ^s)
\times \Lambda \to \ell (\ZZ^s)$. In the same way we get
$$
f * g = \sum_{\beta \in \ZZ^s} \tau^{-\beta} f (\cdot) \, g(\beta)
= \left( f,\hat g \left( (\cdot)^{-1} \right) \right).
$$
The simple computation
$$
\left( \tau^\alpha f, \hat g \right) = \sum_{\beta \in \ZZ^s}
\tau^{\alpha + \beta} f (\cdot) \, g(\beta)
= \left( (\cdot)^\alpha \, \hat g \right) \, f
$$
leads to the almost trivial but very useful duality
\begin{equation}
  \label{eq:bilinDuality}
  \left( \tau^\alpha f, \hat g \right) = \left( f, (\cdot)^\alpha \hat
    g \right),
\end{equation}
which immediately results in the following observation.

\begin{proposition}\label{P:Invariances}
  Let $f \in \ell (\ZZ^s)$ and $\hat g \in \Lambda$.
  \begin{enumerate}
  \item The linear space $\ker (\cdot,\hat) := \left\{ c \in \ell
      (\ZZ^s) : (c,\hat g) = 0 \right\}$ is \emph{shift invariant},
    i.e., closed under translations.
  \item The linear space $\ker (f,\cdot) := \left\{ q \in \Lambda :
      (f,q) = 0 \right\}$ is a \emph{Laurent ideal}.
  \end{enumerate}
\end{proposition}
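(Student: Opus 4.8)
The plan is to reduce everything to the operator form $(f,\hat g)=\hat g(\tau)\,f$ recorded above, in which $\hat g(\tau)$ denotes the Laurent polynomial $\hat g\in\Lambda$ evaluated at the commuting translation operators $\tau_1,\dots,\tau_s$. Since on the full grid $\ZZ^s$ each $\tau_j$ is invertible, $\hat g(\tau)$ is a well-defined linear operator on $\ell(\ZZ^s)$ for every $\hat g\in\Lambda$, and $\hat g\mapsto\hat g(\tau)$ is a ring homomorphism; in particular all the operators $\tau^\alpha$ and $\hat g(\tau)$ commute with one another. Expanding $\hat g$ into monomials and relabelling exponents — exactly the bookkeeping behind \eqref{eq:bilinDuality} — one gets the slightly more general identity
$$
\left(\tau^\alpha f,\,z^\beta\hat g\right)=\tau^{\alpha+\beta}\left(f,\hat g\right),\qquad\alpha,\beta\in\ZZ^s,
$$
and I would first isolate this as the single computational ingredient.

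For part (1) I would fix $\hat g$. That $\ker(\cdot,\hat g)$ is a linear subspace is immediate from bilinearity of $(\cdot,\cdot)$. For shift invariance, take $c$ with $(c,\hat g)=0$ and $\alpha\in\ZZ^s$; the identity above with $\beta=0$ gives $(\tau^\alpha c,\hat g)=\tau^\alpha(c,\hat g)=0$, so $\tau^\alpha c\in\ker(\cdot,\hat g)$, and invertibility of the $\tau_j$ upgrades this to closure under all (forward and backward) translations. For part (2) I would fix $f$. Linearity of $\ker(f,\cdot)\subset\Lambda$ is again just bilinearity, and for $q\in\ker(f,\cdot)$ and arbitrary $r=\sum_\beta r_\beta z^\beta\in\Lambda$ the identity above with $\alpha=0$, together with bilinearity, yields
$$
(f,rq)=\sum_\beta r_\beta\,(f,z^\beta q)=\sum_\beta r_\beta\,\tau^\beta(f,q)=0,
$$
i.e. $(f,rq)=r(\tau)(f,q)=0$, so $rq\in\ker(f,\cdot)$ and $\ker(f,\cdot)$ is a Laurent ideal. (Alternatively, once (2) is in hand, (1) can be read straight off \eqref{eq:bilinDuality}: if $(c,\hat g)=0$ then $\hat g$ lies in the Laurent ideal $\ker(c,\cdot)$, hence so does $z^\alpha\hat g$, whence $(\tau^\alpha c,\hat g)=(c,z^\alpha\hat g)=0$.)

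I do not expect a genuine obstacle here; the proposition is essentially a restatement of the homomorphism property of $\hat g\mapsto\hat g(\tau)$. The only point that warrants a line of care is why the word ``Laurent'' appears in (2) — and, symmetrically, why the shift invariance in (1) is two-sided — namely that $\tau$ acts invertibly on all of $\ZZ^s$, so that negative exponents are legitimate and $\hat g(\tau)$ makes sense for every $\hat g\in\Lambda$; had one worked over $\NN_0^s$ instead, the same computation would only deliver an ordinary polynomial ideal and one-sided shift invariance.
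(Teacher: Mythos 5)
Your proposal is correct and is essentially the paper's own argument: the paper gives no separate proof but asserts that the proposition follows immediately from the duality $(\tau^\alpha f,\hat g)=(f,(\cdot)^\alpha\hat g)$ of (\ref{eq:bilinDuality}), which is exactly the identity you isolate (in the equivalent form $(\tau^\alpha f, z^\beta\hat g)=\tau^{\alpha+\beta}(f,\hat g)$, i.e.\ the homomorphism property of $\hat g\mapsto\hat g(\tau)$) and then apply to both parts. Your closing remark about why invertibility of $\tau$ on $\ZZ^s$ is what makes the ideal a \emph{Laurent} ideal and the invariance two-sided is a correct and worthwhile clarification, but not a departure from the paper's route.
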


\begin{definition}
  A subspace $\cF \subseteq \ell (\ZZ^s)$ is called \emph{shift
    invariant} if $f \in \cF$ implies that $\tau^\alpha f \in \cF$,
  $\alpha \in \NN_0^s$. Moreover,
  for $f \in \ell (\ZZ^s)$ the \emph{shift invariant space}
  $\cS (f)$ generated by $f$ is defined as
  $$
  \cS (f) := \Span \{ \tau^\alpha f : \alpha \in \ZZ^s \}.
  $$
\end{definition}

\noindent
We end this section by defining the rank of Hankel and Toeplitz
operators, setting
\begin{eqnarray}
  \label{eq:rankH}
  \rank H(f) & := & \sup \left\{ \rank H_{A,B} (f) : A,B \subset \ZZ^s,
                    \, \# A, \# B < \infty \right\}, \\
  \label{eq:rankT}
  \rank T(f) & := & \sup \left\{ \rank T_{A,B} (f) : A,B \subset \ZZ^s,
                    \, \# A, \# B < \infty \right\}, \\
  \label{eq:rank+H}
  \rank_+ H(f) & := & \sup \left\{ \rank H_{A,B} (f) : A,B \subset \NN_0^s,
  \, \# A, \# B < \infty \right\}.
\end{eqnarray}
A Hankel or Toeplitz operator is said to be of \emph{finite rank} if
$\rank H(f) < \infty$ or $\rank T(f) < \infty$, respectively.

In the rest of the paper, we will study properties of of multivariate
finite rank Hankel operators and relate them to shift invariant
spaces and zero dimensional ideals. To that end,
Section~\ref{sec:results} will present the main results and the
concepts needed to understand these results. The proofs and further
background material will then be provided in
Section~\ref{sec:Proofs}. Finally, Section~\ref{sec:Conclude} will
provide a short conclusion.

\section{Main results}
\label{sec:results}

We begin by noting that the seemingly different ways of defining the rank
of the operators in (\ref{eq:rankH})--(\ref{eq:rank+H})
lead all to the same number and can even be obtained by the square
symmetric matrices $H_k (f) := H_{\Gamma_k,\Gamma_k} (f)$.

\begin{theorem}\label{T:rankDefsEquiv}
  For $f \in \ell (\ZZ^s)$ we have that
  \begin{equation}
    \label{eq:rankDefsEquiv}
    \rank H(f) = \rank T(f) = \rank_+ H (f) = \lim_{k \to \infty} H_k (f).
  \end{equation}
\end{theorem}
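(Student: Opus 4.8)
The plan is to show that all four quantities coincide with $\dim \cS(f)$, isolating the one genuinely delicate point, namely the passage between the two--sided index set $\ZZ^s$ and the one--sided set $\NN_0^s$.

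First I would record the ``column picture'': the $\beta$--th column of $H_{A,B}(f)$ is the restriction of $\tau^\beta f$ to $A$, so if $R_A : \ell(\ZZ^s) \to \CC^A$ denotes restriction then $\rank H_{A,B}(f) = \dim R_A\bigl( \Span \{ \tau^\beta f : \beta \in B \} \bigr)$. Taking the supremum over finite $B$ gives $\dim R_A(\cS(f))$, and then over finite $A$ gives $\dim \cS(f)$; here one uses the elementary fact that finitely many linearly independent sequences are already independent after restriction to a suitable finite index set, so that a basis of $\cS(f)$ (or, if $\dim \cS(f) = \infty$, an arbitrarily large independent family) is separated by some finite $A$. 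Hence $\rank H(f) = \dim \cS(f)$. The identity $\rank T(f) = \rank H(f)$ then follows at once from $T_{A,B}(f) = H_{A,-B}(f)$ together with the bijection $B \mapsto -B$ of finite subsets of $\ZZ^s$. Finally, $\Gamma_k \subseteq \Gamma_{k+1}$ makes $H_k(f)$ a principal submatrix of $H_{k+1}(f)$, so $\rank H_k(f)$ is nondecreasing and its limit exists in $\NN_0 \cup \{\infty\}$; since the spaces $\Span\{\tau^\beta f : \beta \in \Gamma_k\}$ exhaust $\cS_+(f) := \Span\{\tau^\alpha f : \alpha \in \NN_0^s\}$ and any finite $A, B \subset \NN_0^s$ lie in $\Gamma_k$ for $k$ large, the same column picture yields $\lim_{k\to\infty}\rank H_k(f) = \sup\{\dim R_A(\cS_+(f)) : A \subset \NN_0^s \text{ finite}\} = \rank_+ H(f)$, all of this unconditionally.

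It therefore remains to prove $\rank_+ H(f) = \rank H(f)$, i.e.\ that $\sup_A \dim R_A(\cS_+(f)) = \dim \cS(f)$; the inequality ``$\le$'' is clear. The content is in the case $\rank H(f) = \dim \cS(f) < \infty$. Then $\cS_+(f) \subseteq \cS(f)$ is finite--dimensional, each $\tau_j$ maps $\cS_+(f)$ into itself, and $\tau_j$ is injective on $\ell(\ZZ^s)$; hence $\tau_j$ restricts to a bijection of $\cS_+(f)$, so $\cS_+(f)$ is $\tau_j^{-1}$--invariant as well. Thus $\cS_+(f)$ is a two--sided shift invariant space containing $f$, forcing $\cS_+(f) = \cS(f)$. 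The same device shows that restriction to $\NN_0^s$ is faithful on this space: $Z := \{ g \in \cS(f) : g|_{\NN_0^s} = 0 \}$ is $\tau_j$--invariant, hence, being finite--dimensional, $\tau_j^{-1}$--invariant, hence invariant under all $\tau^\alpha$; if some $0 \neq g \in Z$ had $g(\mu) \neq 0$, then $\tau^\mu g \in Z$ while $(\tau^\mu g)(0) = g(\mu) \neq 0$ would contradict $(\tau^\mu g)|_{\NN_0^s} = 0$, so $Z = \{0\}$. Therefore $R_{\NN_0^s}$ is injective on the finite--dimensional space $\cS(f) = \cS_+(f)$, so already some finite $A \subset \NN_0^s$ separates a basis, giving $\sup_A \dim R_A(\cS_+(f)) = \dim \cS_+(f) = \dim \cS(f)$ as required.

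The heart of the argument — and the only place where finiteness of the rank enters — is this bridge between the one--sided and two--sided index sets: a priori $\cS_+(f)$ could be a proper, information--losing subspace of $\cS(f)$, and the device that rules this out is simply that an injective endomorphism of a finite--dimensional space is bijective, which promotes one--sided shift invariance to two--sided. I would additionally treat the case $\rank H(f) = \infty$ (or note that the finiteness hypothesis is what is really in force there), and double--check the two routine points flagged above: the ``separation by a finite index set'' statement, and that the monotone limit of $\rank H_k(f)$ genuinely realizes the supremum over all finite $A, B \subset \NN_0^s$.
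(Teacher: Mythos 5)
Your proof is correct where it is complete, and for the key step it takes a genuinely different route from the paper. The paper first establishes $\rank H(f)=\dim\cS(f)$ via a Gram--matrix positivity computation and then passes from $\ZZ^s$ to $\NN_0^s$ by translating the row indices $\alpha^j$ into $\NN_0^s$ and replacing $f$ by $\tau^\gamma f$. Your bridge is different: you observe that $\tau_j$ is an injective endomorphism of the finite--dimensional space $\cS_+(f)$, hence bijective, which promotes one--sided shift invariance to two--sided and shows that restriction to $\NN_0^s$ is faithful on $\cS(f)$. This is cleaner, and it makes explicit exactly where finiteness of the rank enters --- something the paper's proof does not do.

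Your hesitation about the case $\rank H(f)=\infty$ is well founded, and the parenthetical remark that ``the finiteness hypothesis is what is really in force'' is the correct resolution rather than a gap you could close: the identity $\rank_+H(f)=\rank H(f)$ is simply false in general. Take $s\ge 1$ and let $f$ be the indicator sequence of the single point $-\epsilon_1\in\ZZ^s$. For $\alpha,\beta\in\NN_0^s$ one has $\alpha+\beta\in\NN_0^s$, so $f(\alpha+\beta)=0$ and therefore $\rank_+H(f)=\lim_{k\to\infty}\rank H_k(f)=0$; on the other hand, choosing $A=\{-k\epsilon_1: 0\le k\le n\}$ and $B=\{(k-1)\epsilon_1: 0\le k\le n\}$ makes $H_{A,B}(f)$ a permutation matrix of rank $n+1$, so $\rank H(f)=\rank T(f)=\infty$. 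The paper's own argument stumbles at exactly this point: replacing $f$ by $\tau^\gamma f$ preserves $\cS(f)$ and $\rank H(f)$ but not $\rank_+H(f)$, since $H_{A,B}(\tau^\gamma f)=H_{A+\gamma,B}(f)$ and $A+\gamma$ need not lie in $\NN_0^s$. The theorem should therefore carry the standing hypothesis $\rank H(f)<\infty$ (equivalently $\dim\cS(f)<\infty$, which is the only case used later in the paper), and under that hypothesis your proof is complete and rigorous.
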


\noindent
In fact, this number is also directly connected to the shift invariant
spaces.

\begin{theorem}\label{T:ranksis}
  For $f \in \ell (\ZZ^s)$ we have that $\rank H(f) = \dim \cS (f)$.
\end{theorem}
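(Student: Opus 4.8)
The plan is to establish both inequalities $\rank H(f) \le \dim \cS(f)$ and $\dim \cS(f) \le \rank H(f)$ by exhibiting a natural correspondence between the column space of the restricted Hankel matrices and the translates of $f$. The key observation is the identity, valid for every $\beta \in \ZZ^s$,
$$
H_{A,B}(f) \, e_\beta = \left( \tau^\beta f \right)\big|_A,
$$
where $e_\beta$ denotes the standard unit vector in $\CC^B$ and $(\cdot)|_A$ is restriction of a sequence to the index set $A$. Thus the columns of $H_{A,B}(f)$ are precisely the restrictions to $A$ of the translates $\tau^\beta f$, $\beta \in B$, and hence the column space of $H_{A,B}(f)$ is the image under restriction-to-$A$ of $\Span\{\tau^\beta f : \beta \in B\}$.

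For the inequality $\rank H(f) \le \dim \cS(f)$: since restriction to $A$ is a linear map, the rank of $H_{A,B}(f)$ is at most $\dim \Span\{\tau^\beta f : \beta \in B\}$, which in turn is at most $\dim \cS(f)$ for every finite $B \subset \ZZ^s$. Taking the supremum over $A$ and $B$ in the definition \eqref{eq:rankH} gives the bound. (If $\dim \cS(f) = \infty$ there is nothing to prove for this direction.)

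For the reverse inequality $\dim \cS(f) \le \rank H(f)$: suppose first $d := \dim \cS(f) < \infty$. Pick $\beta_1,\dots,\beta_d \in \ZZ^s$ such that $\tau^{\beta_1} f,\dots,\tau^{\beta_d} f$ form a basis of $\cS(f)$ and set $B = \{\beta_1,\dots,\beta_d\}$. Since these $d$ sequences are linearly independent in $\ell(\ZZ^s)$, there exists a finite set $A \subset \ZZ^s$ on which their restrictions remain linearly independent — here I would use that linear independence of finitely many sequences is witnessed on a finite coordinate set (choose $A$ to contain, for each nontrivial relation one wants to exclude, an index where it fails; a finite-dimensional argument on the matrix whose columns are the $\tau^{\beta_i} f$ suffices). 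Then $H_{A,B}(f)$ has $d$ linearly independent columns, so $\rank H_{A,B}(f) = d$, and hence $\rank H(f) \ge d$. If $\dim \cS(f) = \infty$, the same reasoning applied to arbitrarily large finite linearly independent families of translates shows $\rank H(f) = \infty$.

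I expect the only real subtlety to be the step of descending from linear independence of the infinite sequences $\tau^{\beta_i} f$ to linear independence of their finite restrictions $(\tau^{\beta_i} f)|_A$; this is the kind of statement that is intuitively clear (a finite linearly independent set in a function space stays independent once restricted to a large enough finite sample) but must be argued, most cleanly by viewing the putative relations as the kernel of the matrix $\bigl( (\tau^{\beta_i} f)(\alpha) \bigr)_{\alpha \in \ZZ^s,\, i}$ and noting a full-rank finite submatrix must exist. Everything else — in particular the column identity and the supremum manipulations — is routine, and Theorem~\ref{T:rankDefsEquiv} is not even needed for this argument, though it could be invoked to replace the supremum by the single sequence $H_k(f)$ if a cleaner exhaustion is desired.
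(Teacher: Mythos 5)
Your proof is correct and rests on the same mechanism as the paper's: by the symmetry of the entries $f(\alpha+\beta)$, the rows and columns of $H_{A,B}(f)$ are restrictions of translates $\tau^\gamma f$ of $f$, so the operator rank and $\dim \cS(f)$ bound one another. The only real difference is in the direction $\dim \cS(f) \le \rank H(f)$, where the paper runs a positive-definite Gram-type computation with conjugated coefficients to show $H_{A,B}(f)g \neq 0$, whereas you argue directly that linear independence of finitely many sequences is witnessed on some finite index set $A$ via a full-rank finite submatrix; this is the same underlying point (and arguably the cleaner way to make it precise), so your argument stands as written.
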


\noindent
Since shifts of finitely supported sequences are linearly
independent if the shifts are so large that the supports are disjoint,
Theorem~\ref{T:ranksis} has an immediate consequence.

\begin{corollary}
  If $f \in \ell_0 (\ZZ^s)$ then $\rank H(f) = \infty$.
\end{corollary}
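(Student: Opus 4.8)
The plan is to reduce everything to Theorem~\ref{T:ranksis}, which identifies $\rank H(f)$ with $\dim \cS(f)$. Thus the entire task becomes: show that a nonzero $f \in \ell_0(\ZZ^s)$ generates an infinite--dimensional shift invariant space, i.e. that there are infinitely many linearly independent translates of $f$. (For $f \equiv 0$ we have $\rank H(f)=0$, so the statement is of course meant for $f\neq 0$, and I would say so explicitly.)

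First I would fix the support $S := \{ \alpha \in \ZZ^s : f(\alpha) \neq 0 \}$, which is finite and nonempty, and choose one coordinate direction, say $\epsilon_1$, together with an integer $N$ strictly larger than the spread of $S$ in that direction, i.e. $N > \max\{ \alpha_1 - \beta_1 : \alpha,\beta \in S \}$. Setting $\gamma_k := k N \epsilon_1 \in \ZZ^s$ for $k \in \NN_0$, the shift $\tau^{\gamma_k} f = f(\cdot + \gamma_k)$ is supported on the translate $S - \gamma_k$, and the choice of $N$ guarantees that these translates are pairwise disjoint.

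Next I would record the (routine) fact that sequences with pairwise disjoint supports are linearly independent: if $\sum_k c_k\, \tau^{\gamma_k} f = 0$ with only finitely many $c_k \neq 0$, then restricting this identity to the block $S - \gamma_k$, on which all summands except the $k$-th vanish, yields $c_k f \equiv 0$ on a nonempty set, hence $c_k = 0$. Consequently $\{ \tau^{\gamma_k} f : k \in \NN_0 \} \subseteq \cS(f)$ is an infinite linearly independent family, so $\dim \cS(f) = \infty$, and Theorem~\ref{T:ranksis} gives $\rank H(f) = \infty$.

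I do not expect a real obstacle here; this is genuinely an immediate corollary. The only points needing (minimal) care are the bookkeeping in choosing $N$ so that the shifted supports are truly disjoint, and the explicit exclusion of the trivial case $f \equiv 0$.
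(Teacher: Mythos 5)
Your argument is exactly the one the paper intends: the corollary is stated as an immediate consequence of Theorem~\ref{T:ranksis} together with the observation that sufficiently separated shifts of a finitely supported sequence have disjoint supports and are therefore linearly independent, which is precisely your construction with the $\gamma_k = kN\epsilon_1$. Your explicit exclusion of the trivial case $f \equiv 0$ (where $\rank H(f) = 0$) is a small but legitimate sharpening that the paper leaves implicit.
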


\noindent
It is even possible to give a slightly more ``quantitative'' version
of Theorem~\ref{T:rankDefsEquiv} for Hankel operators of finite
rank. To formulate it, recall the positive part of the
\emph{hyperbolic cross},
\begin{equation}
  \label{eq:HyperDef}
  \Upsilon_n := \left\{ \alpha \in \NN_0^s : \prod_{j=1}^s ( 1 +
    \alpha_j ) \le n \right\},
\end{equation}
which is a canonical and to some extent minimal choice for the set $A$
in Prony's
problem as $(\cdot)^{\Upsilon_n}$ allows for interpolation at
arbitrary $n+1$ points in $\CC^s$, cf. \cite{Sauer18:_prony}. The next
statement tells us when the ranks stabilize.

\begin{theorem}\label{T:RankStable}
  If $\rank H(f) < \infty$, then
  \begin{equation}
    \label{eq:RankStable}
    \rank H_k (f) = \rank H_{\Upsilon_k,\Upsilon_k} = \rank H(f), \qquad
    k \ge \rank H(f).
  \end{equation}
\end{theorem}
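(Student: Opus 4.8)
The plan is to combine the identity $\rank H(f) = \dim \cS(f)$ from Theorem~\ref{T:ranksis} with a careful analysis of how the finite truncations $H_k(f)$ and $H_{\Upsilon_k,\Upsilon_k}(f)$ approximate the full operator. Write $r := \rank H(f) < \infty$. Since $\rank H_k(f)$ and $\rank H_{\Upsilon_k,\Upsilon_k}(f)$ are both nondecreasing in $k$ and bounded above by $r$ (by the definitions \eqref{eq:rankH}--\eqref{eq:rank+H}), it suffices to show that each already reaches $r$ once $k \ge r$. The key structural fact I would exploit is the duality \eqref{eq:bilinDuality}: the column space of $H_{A,B}(f)$ inside $\CC^A$ is the image under restriction-to-$A$ of $\Span\{\tau^\beta f : \beta \in B\}$, so as $B$ grows to all of $\ZZ^s$ the columns span (the restriction to $A$ of) all of $\cS(f)$. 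Hence $\rank H_{A,\ZZ^s}(f) = \dim\big(\cS(f)\big|_A\big)$, and this equals $r$ as soon as $A$ is large enough that restriction to $A$ is injective on $\cS(f)$.

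The crux is therefore the claim that $\Gamma_r$ (and likewise $\Upsilon_r$) is already ``large enough'' in both the row and the column direction. For the column side, I would argue that $\Span\{\tau^\beta f : \beta \in \Gamma_r\}$ already exhausts $\cS(f)$: since $\dim \cS(f) = r$, the ascending chain of spaces $V_k := \Span\{\tau^\beta f : |\beta| \le k\}$ stabilizes, and if $V_{k+1} = V_k$ then $\tau_j V_k \subseteq V_k$ for all $j$, so the chain is stationary from that point on; because each strict inclusion raises the dimension by at least one and $\dim V_0 \ge 1$ (assuming $f \ne 0$; the case $f=0$ is trivial), stabilization occurs by step $r-1$ at the latest, giving $V_{r-1} = \cS(f)$, a fortiori $V_r = \cS(f)$. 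For the row side, one needs that restriction to $\Gamma_r$ is injective on $\cS(f)$: a sequence $g \in \cS(f)$ vanishing on $\Gamma_r$ would, via the duality, force $\rank H_{\Gamma_r, B}(f) < r$ for every $B$, but we have just shown $\rank H_{\Gamma_r,\Gamma_r}(f)$ can be made to equal $r$ — more precisely, I would note that $\cS(f)$ is shift invariant and spanned by $\{\tau^\beta f: |\beta|\le r-1\}$, so if $g\in\cS(f)$ vanishes on $\Gamma_r$, then each $\tau^\gamma g$ with $|\gamma|\le$ the relevant bound vanishes on a translate, and a dimension count on the stabilized chain again forces $g = 0$. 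Combining the two directions yields $\rank H_{\Gamma_r,\Gamma_r}(f) = r$, and monotonicity propagates this to all $k \ge r$.

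For the hyperbolic-cross version, the same argument goes through once one knows $\#\Upsilon_r \ge r$ and that $\Upsilon_r$ enjoys the same "generating" and "separating" properties as $\Gamma_r$. The containment $\{\alpha : |\alpha| \le r-1 \text{ with at most one nonzero entry}\}$-type skeleton inside $\Upsilon_r$, together with the interpolation property of $(\cdot)^{\Upsilon_n}$ at $n+1$ points recalled before the theorem (citing \cite{Sauer18:_prony}), is what I would use: by Theorem~\ref{T:ranksis} applied in the form developed in Section~\ref{sec:Proofs}, $\cS(f)$ is the span of translates of an exponential-polynomial system supported on at most $r$ frequencies, and a set admitting interpolation at $r$ points separates such a system. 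So $\rank H_{\Upsilon_r,\Upsilon_r}(f) = r$ as well.

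The main obstacle I anticipate is the row-side injectivity (the "separating" claim): showing cleanly that vanishing on $\Gamma_r$ or $\Upsilon_r$ forces an element of $\cS(f)$ to be zero. The cheap route is the dimension/chain argument sketched above, which avoids any explicit description of $\cS(f)$; the conceptually cleaner route uses the structure theorem for finite-rank Hankel operators (that $\cS(f)$ decomposes into generalized eigenspaces of the shift, i.e., $f$ is an exponential polynomial with at most $r$ frequencies) together with the stated interpolation property of the hyperbolic cross. I would present the chain argument for $\Gamma_k$ and invoke the interpolation property — already flagged in the text — for $\Upsilon_k$, so that neither step requires new machinery beyond Theorems~\ref{T:rankDefsEquiv} and \ref{T:ranksis}.
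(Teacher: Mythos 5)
Your argument for the $\Gamma_k$ half is correct and takes a genuinely different, more elementary route than the paper. The paper deduces Theorem~\ref{T:RankStable} from the Vandermonde factorization (\ref{eq:PronyFactGeneral2}): $\rank H_{A,B}(f)=\rank H(f)$ exactly when $(\cdot)^A$ and $(\cdot)^B$ admit the Hermite interpolation problem (\ref{eq:HermiteIntProb}), and it then cites the fact that $(\cdot)^{\Gamma_k}$ and $(\cdot)^{\Upsilon_k}$ admit any such problem with at most $k$ conditions. Your chain argument avoids the factorization entirely: since $\Gamma_{k+1}=\Gamma_k\cup\bigcup_j(\Gamma_k+\epsilon_j)$, the ascending chain $V_k=\Span\{\tau^\beta f:\beta\in\Gamma_k\}$ satisfies $V_{k+1}=V_k+\sum_j\tau_jV_k$; once it stalls, injectivity of $\tau_j$ on a finite-dimensional invariant subspace makes $V_k$ invariant under $\tau_j^{-1}$ as well, hence equal to $\cS(f)$, and the dimension count gives stabilization by step $r-1$. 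The dual descending chain $\ker R_k=\{g\in\cS(f):g|_{\Gamma_k}=0\}$ stabilizes only at $\{0\}$ by the same two-sided invariance trick, so restriction to $\Gamma_r$ is injective on $\cS(f)$. Together with $\rank H_{\Gamma_k,\Gamma_k}(f)=\dim\big(V_k|_{\Gamma_k}\big)$ this yields the claim for $H_k(f)$ using nothing beyond Theorem~\ref{T:ranksis}. (Your first formulation of the row-side step is circular, but the chain version you substitute for it is sound.) This buys self-containedness; the paper's route buys uniformity, since one interpolation lemma handles both index sets at once.

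For the hyperbolic cross there is a genuine gap. The chain argument does not transfer, because $\Upsilon_{k+1}$ is \emph{not} $\Upsilon_k\cup\bigcup_j(\Upsilon_k+\epsilon_j)$ (e.g.\ for $s=2$, $(1,1)\notin\Upsilon_3$ although $(1,0),(0,1)\in\Upsilon_2$), so $V'_{k+1}=V'_k$ for $V'_k=\Span\{\tau^\beta f:\beta\in\Upsilon_k\}$ does not force invariance under the shifts and the stabilization step collapses. You therefore fall back on the interpolation property of $\Upsilon_n$, but the only property recalled in the text is \emph{Lagrange} interpolation at arbitrary $n+1$ points, and your phrase ``a set admitting interpolation at $r$ points separates such a system'' is exactly where this fails: when some $\cQ_\omega$ has dimension greater than one, separating $\cS(f)$ on $\Upsilon_r$ amounts to the full rank of $V(e^\Omega,\cQ_\Omega;\Upsilon_r)$, i.e.\ to \emph{Hermite} interpolation with $r$ conditions carrying $D$-invariant multiplicity spaces. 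In several variables the Lagrange property at $n+1$ points does not formally imply the Hermite property, so this step needs the stronger result from \cite{Sauer2017:_Reconstruction} that the paper itself cites. Your argument is complete for simple Hankel operators; for the general finite-rank case the $\Upsilon_k$ half must invoke that Hermite interpolation statement rather than the Lagrange one.
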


\noindent
The next theorem connects finite rank Hankel and Toeplitz operators to
ideals and Prony's problem. To that end, recall that an \emph{ideal} $I$
in $\Lambda$ or $\Pi$ is a subset that is closed under addition and
multiplication with arbitrary elements of $\Lambda$ and $\Pi$,
respectively. Laurent ideals are somewhat intricate since they are
only well-defined on $(\CC \setminus \{ 0 \})^s$ and since there
exists only a trivial grading on $\Lambda$; but already the
proofs in \cite{Sauer17_Prony} showed that we can easily restrict ourselves
to \emph{polynomial ideals} $I \subseteq \Pi$.

An ideal is called \emph{zero dimensional} if $\Pi / I$
is finite dimensional which also implies that the associated variety,
$V(I) = \{ z \in \CC^s : f(z) = 0, f \in I \}$, is finite. Any
polynomial ideal has a finite basis $G$, i.e., a finite subset $G
\subset I$ such that
$$
I = \Ideal{G} = \left\{ \sum_{g \in G} q_g \, g : q_g \in \Pi, \, g
  \in G \right\}.
$$
A special choice for such a basis are the well-known \emph{Gr\"obner
  bases} which can be computed efficiently and allow for a
well-defined computation of division with unique
remainder. cf. \cite{CoxLittleOShea92}. The ideal theoretic approach
to solve Prony's problem then leads to the following result.

\begin{theorem}\label{T:finrankIdealChar}
  For $f \in \ell (\ZZ^s)$ the following statements
  are equivalent. 
  \begin{enumerate}
  \item\label{it:finrankIdealChar1} $\rank H(f) < \infty$.
  \item\label{it:finrankIdealChar2} There exists an ideal $I \subset
    \Pi$ with a Gr\"obner basis $G$ such that
    \begin{equation*}
      \cS (f) = \bigcap_{q \in I} \ker \, ( \cdot,q ) = \bigcap_{\hat g
        \in G} \ker \, ( \cdot,\hat g )
    \end{equation*}
    and $\rank H(f) = \dim \Pi / I$.
  \item\label{it:finrankIdealChar3}
    There exists $\Omega \subset \left( \RR + i \TT \right)^s$,
    $\# \Omega < \infty$ and shift invariant subspaces $\cQ_\omega
    \subset \Pi$, $\omega \in \Omega$, such that
    \begin{equation*}
      \cS (f) = \bigoplus_{\omega \in \Omega} \cQ_\omega \,
      e^{\omega^T \cdot} \qquad \text{and} \qquad \rank H(f) =
      \sum_{\omega \in \Omega} \dim \cQ_\omega. 
    \end{equation*}
  \item\label{it:finrankIdealChar4}
    $f$ is of the form
    \begin{equation*}
      f (x) = \sum_{\omega \in \Omega} f_\omega (x) \, e^{\omega^T x},
      \qquad f_\omega \in \cQ_\omega, \quad \omega \in \Omega,
    \end{equation*}
    where $\Omega$ and $\cQ_\omega$ are as in
    (\ref{it:finrankIdealChar3}).
  \end{enumerate}
\end{theorem}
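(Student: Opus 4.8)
The plan is to prove the four statements equivalent by establishing the cycle $(\ref{it:finrankIdealChar1}) \Rightarrow (\ref{it:finrankIdealChar2}) \Rightarrow (\ref{it:finrankIdealChar3}) \Rightarrow (\ref{it:finrankIdealChar4}) \Rightarrow (\ref{it:finrankIdealChar1})$, leaning heavily on Theorem~\ref{T:ranksis}, which already identifies $\rank H(f)$ with $\dim \cS(f)$, and on Proposition~\ref{P:Invariances}. The starting point for $(\ref{it:finrankIdealChar1}) \Rightarrow (\ref{it:finrankIdealChar2})$ is to set $I := \ker(f,\cdot) = \{ q \in \Pi : (f,q) = 0 \}$, which is an ideal by Proposition~\ref{P:Invariances}(2) (restricting from $\Lambda$ to $\Pi$ as the paper indicates is harmless). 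The map $q \mapsto (f,q) = \hat q(\tau) f$ descends to a well-defined \emph{injective} linear map $\Pi / I \to \cS(f)$, and the duality (\ref{eq:bilinDuality}) shows its image is exactly $\Span\{ \tau^\alpha f \} = \cS(f)$, since every monomial $(\cdot)^\alpha$ maps to $\tau^\alpha f$; hence $\dim \Pi / I = \dim \cS(f) = \rank H(f) < \infty$, so $I$ is zero-dimensional and thus has a Gröbner basis $G$. The identity $\cS(f) = \bigcap_{q \in I} \ker(\cdot, q)$ needs an argument in both directions: ``$\subseteq$'' is clear since $(\tau^\alpha f, q) = (f, (\cdot)^\alpha q) = 0$ for $q \in I$; for ``$\supseteq$'' one uses that $\Pi/I$ is finite-dimensional, so a normal-form/division argument against $G$ writes any $c$ in the right-hand space in terms of finitely many $\tau^\alpha f$. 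That $\bigcap_{q\in I}\ker(\cdot,q) = \bigcap_{\hat g \in G}\ker(\cdot,\hat g)$ is immediate from $I = \langle G \rangle$ together with the duality (\ref{eq:bilinDuality}), which propagates the vanishing from $G$ to all of $I$.

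For $(\ref{it:finrankIdealChar2}) \Rightarrow (\ref{it:finrankIdealChar3})$ the plan is to invoke the structure theory of zero-dimensional ideals: since $I$ is zero-dimensional, $V(I) = \{ \zeta_1, \dots, \zeta_m \} \subset \CC^s$ is finite, and because $\cS(f) \subseteq \ell(\ZZ^s)$ consists of sequences annihilated by $I$ acting via $\tau$, every element of $\cS(f)$ is a combination of ``polynomial $\times$ exponential'' sequences $x \mapsto p(x)\, \zeta^x$ — this is the standard observation that solutions of the difference equations $\hat g(\tau) c = 0$, $g \in G$, are supported spectrally on $V(I)$, with the multiplicities governed by the local structure of $\Pi / I$ at each $\zeta_i$. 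Writing $\zeta_i = e^{\omega_i}$ with $\omega_i \in (\RR + i\TT)^s$ and collecting the $\omega$-components gives the direct sum $\cS(f) = \bigoplus_\omega \cQ_\omega e^{\omega^T\cdot}$; the spaces $\cQ_\omega$ are shift invariant because $\cS(f)$ is (translation multiplies $e^{\omega^T x}$ by a constant, preserving each summand), and the dimension count $\rank H(f) = \dim\cS(f) = \sum_\omega \dim \cQ_\omega$ follows from the direct-sum decomposition together with $\dim\Pi/I = \sum_i \dim (\Pi/I)_{\zeta_i}$.

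The implication $(\ref{it:finrankIdealChar3}) \Rightarrow (\ref{it:finrankIdealChar4})$ is the easiest: since $f \in \cS(f) = \bigoplus_\omega \cQ_\omega e^{\omega^T\cdot}$, we may decompose $f(x) = \sum_\omega f_\omega(x) e^{\omega^T x}$ with $f_\omega \in \cQ_\omega$ directly from membership in the direct sum. For the closing implication $(\ref{it:finrankIdealChar4}) \Rightarrow (\ref{it:finrankIdealChar1})$ one computes $\cS(f)$ for an $f$ of the stated form: translating $f$ by $\alpha$ yields $\sum_\omega (\tau^\alpha f_\omega)(x)\, e^{\omega^T\alpha}\, e^{\omega^T x}$, and since each $\cQ_\omega$ is shift invariant and finite-dimensional (a shift-invariant subspace of $\Pi$ is finite-dimensional, being contained in a space of polynomials of bounded degree — this deserves a one-line justification), we get $\cS(f) \subseteq \bigoplus_\omega \cQ_\omega e^{\omega^T\cdot}$, hence $\dim\cS(f) \le \sum_\omega \dim\cQ_\omega < \infty$, so $\rank H(f) < \infty$ by Theorem~\ref{T:ranksis}. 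I expect the main obstacle to be the spectral decomposition step in $(\ref{it:finrankIdealChar2}) \Rightarrow (\ref{it:finrankIdealChar3})$: making precise that the annihilated sequence space $\cS(f)$ splits over $V(I)$ \emph{with the right multiplicities}, i.e.\ matching the local dimensions of $\Pi/I$ at each $\zeta_i$ to $\dim\cQ_\omega$, requires either a careful argument via the primary decomposition of $I$ and duality between $\Pi/I$ and its space of dual functionals, or an appeal to the commuting-operator (multiplication matrices) picture; the rest is bookkeeping with the duality (\ref{eq:bilinDuality}) and dimension counts.
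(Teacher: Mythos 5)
Your overall architecture---the cycle (1)$\Rightarrow$(2)$\Rightarrow$(3)$\Rightarrow$(4)$\Rightarrow$(1), anchored on $\rank H(f)=\dim\cS(f)$---matches the paper's, and two of your steps differ in instructive ways. In (1)$\Rightarrow$(2) you take $I=\ker(f,\cdot)\cap\Pi$ directly and note that $q\mapsto\hat q(\tau)f$ induces an isomorphism $\Pi/I\to\cS(f)$, whereas the paper assembles $I$ from the kernels of the matrices $H_k(f)$ and closes the identity $\cS(f)=\bigcap_{\hat g\in G}\ker(\cdot,\hat g)$ by a pure dimension count. Your isomorphism gives $\dim\Pi/I=\dim\cS(f)$ cleanly, but your ``$\supseteq$'' direction is the weak point: a division-by-$G$ argument only shows that an element of $\bigcap_{q\in I}\ker(\cdot,q)$ is determined by its values on the $\dim\Pi/I$ standard monomials \emph{over $\NN_0^s$}; to control sequences on all of $\ZZ^s$ you need the multiplication maps by $(\cdot)_j$ on $\Pi/I$ to be invertible, i.e.\ $V(I)\subset(\CC\setminus\{0\})^s$. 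The cheapest repair is the paper's: you already have $\cS(f)\subseteq\bigcap_{q\in I}\ker(\cdot,q)$ and $\dim\cS(f)=\dim\Pi/I$, so it suffices to bound the solution space of the difference system above by $\dim\Pi/I$. In (4)$\Rightarrow$(1) you bypass the Vandermonde factorization (Theorem~\ref{T:PronyFactGeneral}) and instead bound $\dim\cS(f)$ by shift-invariance and invoke Theorem~\ref{T:ranksis}; that is a more elementary and valid route, though it forgoes the factorization the paper reuses for Corollary~\ref{C:FactorThm}. One correction is needed there: a shift-invariant subspace of $\Pi$ is \emph{not} finite-dimensional in general ($\Pi$ itself is one); what you actually need, and what is true, is that the shift-invariant space generated by the single polynomial $f_\omega$ lies in the polynomials of degree at most $\deg f_\omega$ and is therefore finite-dimensional.

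Two genuine gaps remain. First, in (2)$\Rightarrow$(3) you write $\zeta_i=e^{\omega_i}$ without checking that no coordinate of $\zeta_i$ vanishes; for a general zero-dimensional ideal this can fail, and the paper devotes Corollary~\ref{C:KernIdealZero} to proving $V(I)\subset\left(\CC\setminus\{0\}\right)^s$ for this particular $I$, via the colon ideal $I:\Ideal{(\cdot)_j}$ and the identity $\left(f,(\cdot)_jq\right)=\left(\tau^{\epsilon_j}f,q\right)$. You need this fact both here and for the $\ZZ^s$-versus-$\NN_0^s$ issue above. Second, the spectral decomposition of the solution space of the difference system \emph{with the correct multiplicities}---which you rightly flag as the main obstacle---is asserted rather than proved; the paper does not prove it from scratch either, but routes it through the operator $L$ of (\ref{eq:LDef}), Gr\"obner's Theorem~\ref{T:Groebner}, Proposition~\ref{P:DLShiftInvar}, and a kernel description cited from earlier work. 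If your proof is meant to be self-contained, this is the step that still has to be written out; if a citation is acceptable, your sketch sits at the same level of rigor as the paper's.
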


\begin{remark}
  The ideal $I$ of statement (\ref{it:finrankIdealChar2}) is
  the \emph{annihilating filter ideal} of the shift invariant space
  $\cS(f)$. Strictly speaking, filters are usually defined as
  convolutions, but since a convolution is just a correlation with the
  reflection of the filter, this makes no difference. Alternatively,
  one could also consider the Gr\"obner basis $G$ as a system of
  partial difference equations whose homogeneous solution space is
  again $\cS(f)$.
\end{remark}

\noindent
The simplest case of the representation given in statement
(\ref{it:finrankIdealChar4}) of Theorem~\ref{T:finrankIdealChar} is
that all spaces $\cQ_\omega$ are simplest possible, i.e., $\cQ_\omega
= \Pi_0 = \CC$. This corresponds to the generic situation that all
the common zero are simple, see
Theorem~\ref{T:Groebner} and the discussion following it, and deserves
to be distinguished.

\begin{definition}
  A Hankel operator $H(f)$ of finite rank is called \emph{simple} if
  $\dim \cQ_\omega = 1$, $\omega \in \Omega$.
\end{definition}

\noindent
Simple Hankel operators, i.e., Hankel operators formed from
multiinteger samples of functions of the form
\begin{equation}
  \label{eq:HankelSimple}
  f(x) = \sum_{\omega \in \Omega} f_\omega \, e^{\omega^T x}, \qquad
  f_\omega \in \CC \setminus \{ 0 \},
\end{equation}
admit a particularly simple factorization that is obtained very easily. To
that end, recall the concept of the \emph{Vandermonde matrix} to a
\emph{Lagrange interpolation problem} at $\Theta$, 
$$
V(\Theta;A) =
\begin{pmatrix}
  \theta^\alpha :
  \begin{array}{c}
    \theta \in \Theta \\ \alpha \in A
  \end{array}
\end{pmatrix} \in \CC^{\Theta \times A}, \qquad \Theta \subset \CC^s, \quad
A \subseteq \NN_0^s,
$$
which allows us allows to write the interpolation problem
$$
y_\theta = \sum_{\alpha \in A} a_\alpha \, \theta^\alpha, \qquad
\theta \in \Theta,
$$
as the linear system $y = V( \Theta;A ) \, a$. If $f$ is of the form
(\ref{eq:HankelSimple}), we get for $\alpha,\beta \in \NN_0^s$ that
$$
H(f)_{\alpha,\beta}
=  \left( H_{A,B} (f) \right)_{\alpha,\beta} 
= \sum_{\omega \in \Omega} f_\omega \, e^{\omega^T (\alpha+\beta)}
= \left( V (e^\Omega;A)^T F_\Omega V(e^\Omega;B) \right)_{\alpha,\beta},
$$
where $F_\Omega = \diag
\begin{pmatrix}
  f_\omega : \omega \in \Omega
\end{pmatrix}$
This already proves the following result.

\begin{corollary}
  $H(f)$ is a simple Hankel operator of finite rank if and only if
  there exists a nonsingular diagonal matrix $F_\Omega \in \CC^{\rank
    H(f) \times \rank H(f)}$ such that
  \begin{equation}
    \label{eq:FinrankHFact}
    H_{A,B} (f) = V (e^\Omega;A)^T F_\Omega V(e^\Omega;B), \qquad
    H(f) = V \left( e^\Omega;\NN_0^s \right)^T F_\Omega V \left(
      e^\Omega;\NN_0^s \right).
  \end{equation}
\end{corollary}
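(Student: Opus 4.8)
The plan is to obtain both implications from the structural description of finite--rank Hankel operators in Theorem~\ref{T:finrankIdealChar}, combined with the one--line matrix computation carried out immediately before the statement; the remark that ``this already proves the following result'' refers to the forward implication, so the only real work is in the converse.

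\textbf{The direction ``$\Rightarrow$''.} Assume $H(f)$ is a simple Hankel operator of finite rank. Statements (\ref{it:finrankIdealChar3}) and (\ref{it:finrankIdealChar4}) of Theorem~\ref{T:finrankIdealChar} supply a finite $\Omega \subset (\RR + i \TT)^s$, shift invariant subspaces $\cQ_\omega \subseteq \Pi$ and a representation $f(x) = \sum_{\omega \in \Omega} f_\omega(x)\, e^{\omega^T x}$ with $f_\omega \in \cQ_\omega$ and $\rank H(f) = \sum_{\omega \in \Omega} \dim \cQ_\omega$. By the definition of \emph{simple}, $\dim \cQ_\omega = 1$ for every $\omega$, hence $\cQ_\omega = \Pi_0 = \CC$ as recorded in the discussion preceding that definition; thus each $f_\omega$ is a scalar, and it is nonzero because $f$ generates $\cS(f)$. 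Therefore $f$ is exactly of the form (\ref{eq:HankelSimple}) and $\#\Omega = \rank H(f)$. Inserting this $f$ into the identity $H(f)_{\alpha,\beta} = \sum_{\omega \in \Omega} f_\omega\, e^{\omega^T(\alpha+\beta)}$ displayed above and reading it as a matrix product yields (\ref{eq:FinrankHFact}) with $F_\Omega = \diag(f_\omega : \omega \in \Omega) \in \CC^{\rank H(f) \times \rank H(f)}$, which is nonsingular since no $f_\omega$ vanishes.

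\textbf{The direction ``$\Leftarrow$''.} Suppose a factorization (\ref{eq:FinrankHFact}) with a nonsingular diagonal $F_\Omega = \diag(f_\omega : \omega \in \Omega)$ is given; then all $f_\omega \neq 0$, and since $\omega \mapsto e^\omega$ is injective on $(\RR + i \TT)^s$ the nodes $e^\omega$, $\omega \in \Omega$, are pairwise distinct. Comparing entries shows $f(\alpha+\beta) = \sum_{\omega \in \Omega} f_\omega\, e^{\omega^T(\alpha+\beta)}$ for all $\alpha,\beta \in \NN_0^s$, so $f$ coincides on $\NN_0^s$ with the exponential sum $g(x) := \sum_{\omega \in \Omega} f_\omega\, e^{\omega^T x}$; moreover $\rank_+ H(g) = \#\Omega$ by the Vandermonde factorization together with the interpolation property of $(\cdot)^{\Upsilon_n}$, so Theorem~\ref{T:rankDefsEquiv} gives $\rank H(f) = \rank_+ H(f) = \#\Omega < \infty$. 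Now Theorem~\ref{T:finrankIdealChar}(\ref{it:finrankIdealChar2}) provides a zero--dimensional annihilating ideal $I \subset \Pi$ of $\cS(f)$ with $\dim \Pi/I = \#\Omega$; every $q \in I$ satisfies $q(\tau) f = 0$, and since $q$ is a polynomial it involves only forward shifts, so $(q(\tau) g)|_{\NN_0^s} = (q(\tau) f)|_{\NN_0^s} = 0$. Writing $q(\tau) g = \sum_{\omega\in\Omega} f_\omega\, q(e^\omega)\, e^{\omega^T\cdot}$ and invoking the linear independence of the functions $\alpha \mapsto e^{\omega^T\alpha}$ on $\NN_0^s$ (distinct nodes), we get $q(e^\omega) = 0$ for all $\omega$, i.e.\ $e^\Omega \subseteq V(I)$. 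As $\# V(I) \le \dim \Pi/I = \#\Omega = \# e^\Omega$, we conclude $V(I) = e^\Omega$ with every node a simple zero, whence Theorem~\ref{T:finrankIdealChar}(\ref{it:finrankIdealChar3}) gives $\cS(f) = \bigoplus_{\omega\in\Omega} \CC\, e^{\omega^T\cdot}$ with all $\cQ_\omega = \CC$. Thus $H(f)$ is simple of finite rank, and $\rank H(f) = \#\Omega$ matches the size of $F_\Omega$.

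\textbf{Main obstacle.} The forward direction is a direct substitution into the displayed identity. The delicate point in the converse is that the factorization (\ref{eq:FinrankHFact}) constrains $f$ only on $\NN_0^s$, so one cannot reason about $\cS(f)$ and the $\cQ_\omega$ right away: one must first use $\rank H(f) = \rank_+ H(f)$ (Theorem~\ref{T:rankDefsEquiv}) to secure finiteness of the rank, and then combine the annihilating ideal of Theorem~\ref{T:finrankIdealChar} with the (discrete) linear independence of exponentials on $\NN_0^s$ to recover $\Omega$, the simplicity of the nodes, and the identification of all $\cQ_\omega$ with $\CC$.
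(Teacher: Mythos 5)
Your argument is correct. For the forward implication you do exactly what the paper does: the paper's entire ``proof'' is the displayed entrywise computation $H(f)_{\alpha,\beta}=\sum_\omega f_\omega e^{\omega^T(\alpha+\beta)}$ followed by ``this already proves the following result,'' with the reduction from ``simple'' to the form (\ref{eq:HankelSimple}) via Theorem~\ref{T:finrankIdealChar}(\ref{it:finrankIdealChar4}) left implicit; you make that reduction and the nonvanishing of the $f_\omega$ explicit, which is all to the good. Where you genuinely add something is the converse, which the paper does not argue at all. You correctly identify the one real subtlety --- that (\ref{eq:FinrankHFact}) pins down $f$ only on $\NN_0^s$ --- and resolve it by first invoking $\rank H(f)=\rank_+H(f)$ from Theorem~\ref{T:rankDefsEquiv} to get finiteness of the rank, and then showing $e^\Omega\subseteq V(I)$ for the annihilating ideal $I$ of Theorem~\ref{T:finrankIdealChar}(\ref{it:finrankIdealChar2}) and squeezing $\#V(I)\le\dim\Pi/I=\#\Omega$ to force all multiplicity spaces $\cQ'_\theta$ (hence all $\cQ_\omega$) to be one-dimensional via (\ref{eq:GroebnerZeroDim2}). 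This zero-counting detour is necessary precisely because a given factorization need not a priori be the canonical one of Theorem~\ref{T:PronyFactGeneral}, so one cannot simply read off $\dim\cQ_\omega=1$ from the diagonality of $F_\Omega$; your route closes a gap the paper glosses over. The only step stated a bit tersely is that $f_\omega\neq 0$ in the forward direction: the clean justification is that the shifts $\tau^\alpha f=\sum_\omega f_\omega e^{\omega^T\alpha}e^{\omega^T\cdot}$ would all lie in $\bigoplus_{\omega'\neq\omega}\cQ_{\omega'}e^{\omega'^T\cdot}$ if some $f_\omega$ vanished, contradicting the directness of the decomposition in statement (\ref{it:finrankIdealChar3}).
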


\noindent
The factorizations (\ref{eq:FinrankHFact}) are known in various
instances and play a fundamental role in the multidimensional
(truncated) moment problem, cf. \cite{schmuedgen17:_momen_probl}.

\noindent
In the general situation, the analogy of (\ref{eq:FinrankHFact}) is
slightly more
intricate since now multiple zeros have to be considered. In the
context of Prony's problem this has been first done in
\cite{Mourrain16P}; a different approach has been studied in
\cite{Sauer2017:_Reconstruction}. To recall the latter, let $\Theta
\subset \CC^s$
be a finite set of nodes, let $\cQ_\Theta = \left( \cQ_\theta : \theta
  \in \Theta \right)$ be a vector of \emph{$D$--invariant} multiplicity
spaces which will be defined precisely in
Definition~\ref{D:DInvariant}, and let $Q_\theta \subset \Pi$, $\#
Q_\theta = \dim \cQ_\theta$, $\theta \in \Theta$, be bases of these
multiplicity spaces. Then the Vandermonde matrix
\begin{equation}
  \label{eq:HermiteVandermonde}
  V \left( \Theta,Q_\Theta; A \right) :=
  \begin{pmatrix}
    \left( q(D) (\cdot)^\alpha \right) (\theta) :
    \begin{array}{c}
      q \in Q_\Theta, \, \theta \in \Theta \\ \alpha \in A
    \end{array}
  \end{pmatrix}  
\end{equation}
encodes the Hermite interpolation problem (\ref{eq:HermiteIntProb})
which will be discussed later as well. Moreover, it allows us to give
the general factorization of finite rank Hankel operators.

\begin{corollary}\label{C:FactorThm}
  $H(f)$ is a finite rank Hankel operator if and only if there exists
  a finite set $\Omega \subset \left( \RR + i \TT \right)^s$, finite
  dimensional $D$-invariant spaces $\cQ_\Theta \subset \Pi$ with basis
  $Q_\Theta$, $\theta \in \Theta$, and a nonsingular block diagonal
  matrix 
  \begin{equation}
    \label{eq:FactorThmFMat}
    F_{e^\Omega,Q_\Omega} =
    \begin{pmatrix}
      F_\omega \in \CC^{\dim \cQ_\omega \times \dim \cQ_\omega} : \omega \in \Omega 
    \end{pmatrix}
  \end{equation}
  such that
  \begin{equation}
    \label{eq:FactorThm}
    H_{A,B} (f) = V \left( \Theta,Q_\Theta; A \right)^T
    F_{e^\Omega,Q_\Omega} \, V \left( \Theta,Q_\Theta; B \right),
    \qquad A,B \subseteq \NN_0^s,
  \end{equation}
  and, in particular,
    \begin{equation}
    \label{eq:FactorThm2}
    H (f) = V \left( \Theta,Q_\Theta; \NN_0^s \right)^T
    F_{e^\Omega,Q_\Omega} \, V \left( \Theta,Q_\Theta; \NN_0^s \right).
  \end{equation}
\end{corollary}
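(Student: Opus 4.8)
The plan is to read the factorization off the structural description of finite rank Hankel operators supplied by Theorem~\ref{T:finrankIdealChar}, so that Corollary~\ref{C:FactorThm} becomes essentially a coordinatized form of part~(\ref{it:finrankIdealChar4}) of that theorem. The implication ``$\Leftarrow$'' needs no structure theory: if $H_{A,B}(f)=V(\Theta,Q_\Theta;A)^T F_{e^\Omega,Q_\Omega}\,V(\Theta,Q_\Theta;B)$ for all finite $A,B\subseteq\NN_0^s$, then $\rank H_{A,B}(f)$ is bounded by the order $\sum_{\omega\in\Omega}\dim\cQ_\omega$ of the square matrix $F_{e^\Omega,Q_\Omega}$, so $\rank_+H(f)<\infty$ and hence $\rank H(f)<\infty$ by \eqref{eq:rank+H} and Theorem~\ref{T:rankDefsEquiv}.

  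For ``$\Rightarrow$'' I would start from Theorem~\ref{T:finrankIdealChar}\,(\ref{it:finrankIdealChar4}), which gives a finite $\Omega\subset(\RR+i\TT)^s$ and shift invariant subspaces $\cQ_\omega\subseteq\Pi$ (these are $D$--invariant in the sense of Definition~\ref{D:DInvariant}) with
  \[
    f(x)=\sum_{\omega\in\Omega}f_\omega(x)\,e^{\omega^T x},\qquad f_\omega\in\cQ_\omega .
  \]
  Set $\Theta:=e^\Omega$ and fix bases $Q_\omega$ of $\cQ_\omega$. The core step is a bilinear addition formula: since $\cQ_\omega$ is closed under translations, the two--variable polynomial $(x,y)\mapsto f_\omega(x+y)$ has the property that $f_\omega(x_0+\,\cdot\,)$ and $f_\omega(\,\cdot\,+y_0)$ lie in $\cQ_\omega$ for every $x_0,y_0$, hence it belongs to $\cQ_\omega\otimes\cQ_\omega$; expanding it in the basis $Q_\omega$ in each argument yields a matrix $F_\omega\in\CC^{\dim\cQ_\omega\times\dim\cQ_\omega}$ with $f_\omega(x+y)=\sum_{q,q'\in Q_\omega}(F_\omega)_{q,q'}\,q(x)\,q'(y)$. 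Substituting $x=\alpha$, $y=\beta$, using $e^{\omega^T(\alpha+\beta)}=e^{\omega^T\alpha}e^{\omega^T\beta}$, and identifying $q(\alpha)\,e^{\omega^T\alpha}$ --- for an appropriate choice of the multiplicity space and basis attached to the node $e^\omega$ --- with the entry $\bigl(q(D)(\cdot)^\alpha\bigr)(e^\omega)$ of the Hermite--Vandermonde matrix \eqref{eq:HermiteVandermonde}, one gets
  \[
    H_{A,B}(f)_{\alpha,\beta}=f(\alpha+\beta)=\sum_{\omega\in\Omega}\sum_{q,q'\in Q_\omega}\bigl(q(D)(\cdot)^\alpha\bigr)(e^\omega)\,(F_\omega)_{q,q'}\,\bigl(q'(D)(\cdot)^\beta\bigr)(e^\omega),
  \]
  which is \eqref{eq:FactorThm} with the block diagonal matrix \eqref{eq:FactorThmFMat}; taking $A=B=\NN_0^s$ gives \eqref{eq:FactorThm2}.

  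It remains to see that $F_{e^\Omega,Q_\Omega}$ is nonsingular. By Theorem~\ref{T:ranksis} and Theorem~\ref{T:finrankIdealChar}\,(\ref{it:finrankIdealChar3}) we have $\rank H(f)=\dim\cS(f)=\sum_{\omega\in\Omega}\dim\cQ_\omega$, i.e.\ exactly the order of $F_{e^\Omega,Q_\Omega}$; and the Hermite interpolation problem \eqref{eq:HermiteIntProb} at $\Theta$ with the $D$--invariant spaces $\cQ_\Theta$ is poised, so $V(\Theta,Q_\Theta;\NN_0^s)$ has full rank $\sum_{\omega\in\Omega}\dim\cQ_\omega$. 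Since in \eqref{eq:FactorThm2} this matrix is sandwiched between two factors of that rank, $F_{e^\Omega,Q_\Omega}$ must be invertible. I expect the main obstacle to be precisely the identification with the Hermite--Vandermonde matrix: pinning down, via Definition~\ref{D:DInvariant}, which $D$--invariant multiplicity space to attach to each node $e^\omega$ so that the rows of \eqref{eq:HermiteVandermonde} span the fibre $\cQ_\omega\,e^{\omega^T\cdot}$ of $\cS(f)$, and then checking that the accompanying block changes of basis can be absorbed into $F_{e^\Omega,Q_\Omega}$ without destroying its block diagonal shape or its invertibility --- with the poisedness of \eqref{eq:HermiteIntProb} as the quantitative input that keeps this consistent.
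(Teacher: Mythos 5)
Your proposal is correct and its skeleton coincides with the paper's: the paper also obtains necessity by passing through Theorem~\ref{T:finrankIdealChar}\,(\ref{it:finrankIdealChar4}) and obtains sufficiency from exactly the rank bound you give (which is the content of the (\ref{it:finrankIdealChar4})\,$\Rightarrow$\,(\ref{it:finrankIdealChar1}) step, routed through $\rank_+H(f)$ and Theorem~\ref{T:rankDefsEquiv}). The one genuine difference is in how the factorization identity itself is produced: the paper simply imports it as Theorem~\ref{T:PronyFactGeneral} (Theorem~5 of \cite{Sauer2017:_Reconstruction}), whereas you sketch a self-contained derivation via the addition formula $f_\omega(x+y)\in\cQ_\omega\otimes\cQ_\omega$ for the shift invariant coefficient spaces, followed by the identification of $q(\alpha)\,e^{\omega^T\alpha}$ with a Hermite--Vandermonde entry. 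That identification is precisely what the paper's machinery $L^{-1}\sigma_\theta$ (equations \eqref{eq:LDef}--\eqref{eq:L-1Def} and Proposition~\ref{P:DLShiftInvar}) accomplishes --- it is a node-by-node linear bijection between the shift invariant space $\cQ_\omega$ and a $D$--invariant multiplicity space at $e^\omega$, so the change of basis is block diagonal and can indeed be absorbed into $F_{e^\Omega,Q_\Omega}$ as you anticipate; your closing rank count (order of $F$ equals $\rank H(f)$, Vandermonde of full row rank by poisedness of \eqref{eq:HermiteIntProb}) then correctly forces nonsingularity. So your route buys independence from the external reference at the cost of carrying out the $L^{-1}\sigma_\theta$ bookkeeping explicitly, while the paper's buys brevity; the obstacle you flag is real but is exactly what the quoted Proposition~\ref{P:DLShiftInvar} and Corollary~\ref{C:KernIdealZero} (ensuring $e^\omega\in(\CC\setminus\{0\})^s$ so the dilation is invertible) are there to resolve.
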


\section{Proofs, background and auxiliary results}
\label{sec:Proofs}
We first note that since $T_{A,B} (f) = H_{A,-B} (f)$, the first two
numbers in (\ref{eq:rankDefsEquiv}) coincide trivially. However, we
begin with the proof of Theorem~\ref{T:ranksis}.

\begin{proof}[Proof of Theorem~\ref{T:ranksis}]
  For $n \le \dim \cS (f)$ and let $f_j = \tau^{\alpha^j} f$,
  $\alpha^j \in \NN_0^s$, $j=1,\dots,n$, be linearly independent
  elements of $\cS (f)$. For $c \in \CC^n$ define $g  \in \ell
  (\ZZ^s)$ as $g := \overline{c_1 \, f_1} + \cdots + \overline{c_n \,
    f_n}$, choose $B$ such that $g(B) \neq 0$ and $A$ such that
  $\bigcup_{j=1}^n \alpha^j \subseteq A$. Then
  \begin{eqnarray*}
    \sum_{j=1}^n c_j \left( H_{A,B} (f) \, g
    \right)_{\alpha^j}
    & = & \sum_{j=1}^n c_j \sum_{\beta \in B} f (\alpha^j + \beta)
          \sum_{k=1}^n \overline{c_k \, f_k (\beta)} \\
    & = & \sum_{j,k=1}^n c_j \overline{c_k} \sum_{\beta \in B} f
          (\alpha^j + \beta) \overline{f (\alpha^k + \beta)}
          = \| g(B) \|_2^2 > 0.
  \end{eqnarray*}
  In other words, $H_{A,B} (f) \, g \neq 0$ for any $g \in \Span \{
  \overline{f_j (B)}  : j=1,\dots,n \}$. This shows that
  $$
  n \le \dim \cS (f) \qquad \Rightarrow \qquad n \le \rank H_{A,B}
  (f) \le \rank (f),
  $$
  implying $\dim \cS (f) \le \rank H (f)$, also in the case $\dim \cS
  (f) = \infty$.

  Conversely, suppose first that $\rank H(f) < \infty$ and choose
  $A,B$ so large that $\rank H_{A,B} (f) = \rank H(f) =:n$. Then
  $H_{A,B} (f)$ contains $n$ linearly independent rows with indices
  $\alpha^j$, $j=1,\dots,n$, so that the sequences $\tau^{\alpha^j}
  f$, $j=1,\dots,n$, are linearly independent even on $B$, and $\dim
  \cS (f) \ge \rank H(f)$. If $\rank H(f) = \infty$, the same argument
  applied to sequences $A_j, B_j$ such that
  $$
  \infty = \rank H(f) = \lim_{j \to \infty} H_{A_j,B_j} (f)
  $$
  shows that $\dim \cS(f) = \infty$.
\end{proof}

\noindent
The shift invariant spaces allow us to complete the proof of
Theorem~\ref{T:rankDefsEquiv}.

\begin{proof}[Proof of Theorem~\ref{T:rankDefsEquiv}]
  Since $\rank_+ H(f) \le \rank H(f)$ and $\rank H_k (f) \le \rank
  H(f)$, it suffices to prove that $\rank_+ H(f) \ge \dim \cS (f)$ and
  that
  $$
  \lim_{k \to \infty} H_k (f) \ge \dim \cS (f).
  $$
  This will be done by choosing the $\alpha^j$ in the preceding proof
  appropriately by taking into account that the $\tau^{\alpha^j} f$
  are linearly independent if and only if $\tau^{\alpha^j + \beta}
  f$, $j=1,\dots,n$, are linearly independent for any $\beta \in
  \ZZ^s$. Moreover, $\cS(f) = \cS( \tau^\gamma f )$ for any $\gamma
  \in \ZZ^s$. We now only have to choose $\gamma \in \ZZ^s$ such that
  $g(B) \neq 0$ for some $B \subset \NN_0^s$ and then $\beta \in
  \NN_0^s$ such that $\alpha^j + \beta \in \NN_0^s$. The same argument
  also works for $H_k$.
\end{proof}

\noindent
The first step of the proof of Theorem~\ref{T:finrankIdealChar} uses
the algebraic solution method for Prony's method: the kernel of a
sufficiently large Hankel submatrix of $H(f)$ defines a polynomial ideal,
the so--called \emph{Prony ideal} whose common zeros are $e^\Omega$
and thus yield the frequencies.

\begin{proof}[Proof of Thorem~\ref{T:finrankIdealChar},
  (\ref{it:finrankIdealChar1}) $\Rightarrow$ (\ref{it:finrankIdealChar2})]
  Since $H(f)$ is of finite rank, there exists, by
  Theorem~\ref{T:rankDefsEquiv} a minimal $n \in \NN$ such that $\rank H(f)
  = \rank H_k (f)$ for any $k \ge n$. If $H_k p = 0$ for some $p \in
  \Gamma_k$, then $\hat p$ is a polynomial such that $(f,\hat p) =
  0$. By Proposition~\ref{P:Invariances}, the set of all $\hat p$ such
  that $H_k p = 0$ for some $k \ge n$ forms an ideal $I$ which has a
  finite Gr\"obner basis $G$ such that $I = \Ideal{G}$. Hence,
  $$
  f \in \bigcap_{\hat g \in G} \ker \, (\cdot,\hat g)
  $$
  and since, again by Proposition~\ref{P:Invariances}, these kernels
  are shift invariant, it follows that $\cS (f) \subseteq \bigcap_{\hat g
    \in G} \ker \, (\cdot,\hat g)$. Since, in addition
  \begin{eqnarray*}
    \dim \ker \, (\cdot,\hat g)
    & = & \dim \Pi / I = {k+s \choose k} - \dim \ker H_k (f) = \rank
          H_k (f) \\
    & = & \rank H(f) = \dim \cS(f) 
  \end{eqnarray*}
  by Theorem~\ref{T:rankDefsEquiv}, we can finally conclude that $\cS
  (f) = \bigcap_{\hat g \in G} \ker \, (\cdot,\hat g)$.
\end{proof}

\begin{corollary}\label{C:KernIdealZero}
  For the ideal $I$ we have that $z \in V(I)$ implies $z_j \neq 0$,
  $j=1,\dots,s$.
\end{corollary}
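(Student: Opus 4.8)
The plan is to reduce the statement to the fact that multiplication by the coordinate function $z_j$ acts \emph{invertibly} on the finite dimensional quotient $\Pi/I$. Granting this, the conclusion is immediate. Fix $j\in\{1,\dots,s\}$ and let $z\in V(I)$. Since every element of $I$ vanishes at $z$, evaluation at $z$ descends to a linear functional $\varepsilon_z:\Pi/I\to\CC$, $\varepsilon_z(q+I)=q(z)$, which is nonzero because $\varepsilon_z(1+I)=1$. Writing $M_j:\Pi/I\to\Pi/I$ for multiplication by $z_j$, the computation $\varepsilon_z\big(M_j(q+I)\big)=z_j\,q(z)=z_j\,\varepsilon_z(q+I)$ exhibits $\varepsilon_z$ as a left eigenvector of $M_j$ with eigenvalue $z_j$; in particular, if $z_j=0$ then $\varepsilon_z$ annihilates the image of $M_j$, which is impossible once $M_j$ is known to be surjective. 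Hence $z_j\neq 0$, and since $j$ was arbitrary this is exactly the claim.

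It remains to show that $M_j$ is invertible, and here the Hankel structure enters through the duality (\ref{eq:bilinDuality}). Recall from the proof of (\ref{it:finrankIdealChar1})~$\Rightarrow$~(\ref{it:finrankIdealChar2}) that $I=\{q\in\Pi:(f,q)=0\}$ and that $\dim\Pi/I=\rank H(f)<\infty$. I would introduce the linear map $\phi:\Pi/I\to\ell(\ZZ^s)$, $\phi(q+I):=(f,q)=q(\tau)f$; it is well defined and \emph{injective} precisely because $q(\tau)f=0$ holds if and only if $q\in I$, and its image is the span of the nonnegative shifts of $f$, a subspace of $\cS(f)$. The key point is the intertwining identity $\phi\circ M_j=\tau_j\circ\phi$, which is just a restatement of (\ref{eq:bilinDuality}) (equivalently, of $\tau_j\,q(\tau)=(z_j\,q)(\tau)$): applying $\phi$ to $z_jq+I$ yields $(z_jq)(\tau)f=\tau_j\big(q(\tau)f\big)$. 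As $\tau_j$ is injective on $\ell(\ZZ^s)$ and $\phi$ is injective, the composite $\phi\circ M_j=\tau_j\circ\phi$ is injective, whence $M_j$ is injective; being an endomorphism of the finite dimensional space $\Pi/I$, it is then bijective. This closes the argument.

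The one step that deserves care — and the place I would double-check — is the identification $I=\{q\in\Pi:(f,q)=0\}$ together with $\dim\Pi/I<\infty$: one must make sure that $\phi$ is genuinely well defined and injective on the \emph{polynomial} quotient $\Pi/I$ rather than on some Laurent object, and that the dimension is finite. Both facts are already contained in the proof of (\ref{it:finrankIdealChar2}) and in Theorems~\ref{T:ranksis} and~\ref{T:rankDefsEquiv}, so no new work is required. Everything else is a one-line computation with $\tau_j\,q(\tau)=(z_jq)(\tau)$ or a routine finite dimensional linear algebra fact, so I do not anticipate a real obstacle.
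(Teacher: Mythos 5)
Your proof is correct and rests on the same mechanism as the paper's: the duality $\left( f, (\cdot)^{\epsilon_j} q \right) = \left( \tau^{\epsilon_j} f, q \right)$ together with the invertibility of $\tau_j$ on $\ell(\ZZ^s)$ shows that multiplication by $z_j$ is injective on the finite dimensional $\Pi/I$ --- which is exactly the paper's assertion $I : \Ideal{(\cdot)_j} = I$ --- and hence bijective, forcing $z_j \neq 0$ at every point of $V(I)$. The only difference is presentational: you conclude via the evaluation functional $\varepsilon_z$ annihilating the image of the surjective operator $M_j$, whereas the paper concludes via the ideal quotient and a dimension count on $\Pi/I$ versus $\Pi/\left( I : \Ideal{(\cdot)_j} \right)$.
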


\begin{proof}
  Suppose that there exists $z \in V(I)$ with $z_j = 0$ for some $j
  \in \{1,\dots,s\}$. Then $z \not\in V \left( I : \Ideal{ (\cdot)_j
    } \right)$. Choose any $q \in I : \Ideal{ (\cdot)_j}$, i.e.,
  $(\cdot)_j q \in I$, then $(\cS(f),I) = 0$ yields that
  $$
  0 = \left( \cS(f),(\cdot)_j q \right) = \left( \tau^{\epsilon_j}
    \cS(f), q \right) = \left( \cS(f), q \right).
  $$
  By the preceding proof of Thorem~\ref{T:finrankIdealChar},
  (\ref{it:finrankIdealChar1}) $\Rightarrow$
  (\ref{it:finrankIdealChar2}), this then yields the contradiction
  $$
  \dim \cS (f) = \dim \Pi / I > \dim \Pi / \left( I : \Ideal{
      (\cdot)_j} \right) = \dim \cS (f).
  $$
  Hence, $V(I) \subset \left( \CC \setminus \{ 0 \} \right)^s$.   
\end{proof}

\begin{remark}
  The requirements
  $$
  0 = (f,\hat g) = f \star g, \qquad \hat g \in G
  $$
  yield a \emph{system of homogeneous difference equations} to
  determine $f$, or more precisely a shift invariant space of
  solutions. In this respect determining the Prony ideal can be
  formulated in the language of signal processing as determining a
  system of \emph{annihilating filters} for the signal $f$.
\end{remark}

\noindent
The next step in the proof of Theorem~\ref{T:finrankIdealChar}
requires some more
background. To that end, recall that any zero dimensional ideal
has finitely many zeros, say $\Theta \subset \CC^s$, $\# \Theta <
\infty$; as shown by Gr\"obner
\cite{groebner37:_ueber_macaul_system_bedeut_theor_differ_koeff,groebner39:_ueber_eigen_integ_differ_koeff}, 
the \emph{multiplicities} of these zeros are not mere numbers any more,
but structural quantities.

\begin{definition}\label{D:DInvariant}
  A subspace $\cQ \subseteq \Pi$ of polynomials is called
  \emph{$D$--invariant}, if $q \in \cQ$ also implies that $p(D) q \in
  \cQ$ for any $p \in \Pi$, where, as usually,
  $$
  p(D) = \sum_{\alpha \in \NN_0^s} p_\alpha \,
  \frac{\partial^{|\alpha|}}{\partial x^\alpha}, \qquad p(z) =
  \sum_{\alpha \in \NN_0^s} p_\alpha \, z^\alpha,
  $$
  denotes the differential operator induced by $p$.
\end{definition}

\noindent
This notion allows us to formulate Gr\"obner's result on multiple
common zeros; more recent work on multiplicities in polynomial system
solving can be found in \cite{MarinariMoellerMora96}.

\begin{theorem}[Gr\"obner,
  \cite{groebner37:_ueber_macaul_system_bedeut_theor_differ_koeff}]\label{T:Groebner} 
  $I \subset \Pi$ is a zero dimensional ideal if and only if there
  exist a finite set $\Theta \subset \CC^s$ and $D$--invariant
  subspaces $\cQ_\theta' \subset \Pi$ such that
  \begin{equation}
    \label{eq:GroebnerZeroDim}
    I = \bigcap_{\theta \in \Theta} \left\{ f \in \Pi : q(D) f
      (\theta) = 0, \, q \in \cQ'_\theta \right\}.
  \end{equation}
  Moreover,
  \begin{equation}
    \label{eq:GroebnerZeroDim2}
    \dim \Pi / I = \sum_{\theta \in \Theta} \dim \cQ_\theta'.
  \end{equation}
\end{theorem}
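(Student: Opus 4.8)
The plan is to prove both implications via the classical apolar (Macaulay) duality between $\Pi$ and the ring of constant--coefficient differential operators, combined with the structure theory of Artinian rings. The central tool is, for each $\theta \in \CC^s$, the bilinear form $\langle f, q \rangle_\theta := \bigl( q(D) f \bigr)(\theta)$ on $\Pi \times \Pi$. First I would record the adjunction identity $\langle (z_j - \theta_j) f, q \rangle_\theta = \langle f, \partial_j q \rangle_\theta$, which is a one--line Leibniz computation in the monomial basis centred at $\theta$ and says that multiplication by $z_j - \theta_j$ on the first slot is dual to the differentiation $\partial_j$ on the second. Two consequences then follow purely formally: if $J \subseteq \Pi$ is an ideal, its orthogonal complement $J^{\perp_\theta} := \{ q \in \Pi : \langle f, q \rangle_\theta = 0 \ \forall f \in J \}$ is $D$--invariant; and conversely, if $W \subseteq \Pi$ is $D$--invariant, then $W^{\perp_\theta} := \{ f \in \Pi : \langle f, q \rangle_\theta = 0 \ \forall q \in W \}$ is an ideal, and it is exactly the set $\{ f \in \Pi : q(D) f (\theta) = 0, \, q \in W \}$ appearing in (\ref{eq:GroebnerZeroDim}). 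I would also observe that, in the monomial basis centred at $\theta$, the form is diagonal with nonzero diagonal entries, hence descends to a \emph{perfect} pairing between $\Pi / \Ideal{z_1 - \theta_1, \dots, z_s - \theta_s}^N$ and the (finite dimensional) space of polynomials of degree less than $N$; from this, $W = (W^{\perp_\theta})^{\perp_\theta}$ for every finite dimensional $D$--invariant $W$, and $\dim \Pi / W^{\perp_\theta} = \dim W$.

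For the ``if'' direction, each set $\{ f \in \Pi : q(D) f(\theta) = 0, \, q \in \cQ_\theta' \}$ occurring in (\ref{eq:GroebnerZeroDim}) is an ideal by the above, being $(\cQ_\theta')^{\perp_\theta}$, so the intersection $I$ is an ideal. Choosing $N$ larger than every degree occurring in the finite dimensional spaces $\cQ_\theta'$ gives $\Ideal{z_1 - \theta_1, \dots, z_s - \theta_s}^N \subseteq (\cQ_\theta')^{\perp_\theta}$, so a product of these powers over $\theta \in \Theta$ lies in $I$ and $\Pi/I$ is finite dimensional: $I$ is zero dimensional. The maximal ideals at distinct points of $\Theta$ are pairwise comaximal, so the Chinese Remainder Theorem yields $\Pi/I \cong \bigoplus_{\theta \in \Theta} \Pi / (\cQ_\theta')^{\perp_\theta}$, and by the perfect pairing the $\theta$--th summand has dimension $\dim \cQ_\theta'$; this is (\ref{eq:GroebnerZeroDim2}).

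For the ``only if'' direction, I would use that a zero dimensional $I$ makes $\Pi/I$ Artinian, so $V(I) = \Theta$ is finite and $I$ has a primary decomposition $I = \bigcap_{\theta \in \Theta} Q_\theta$ with $Q_\theta$ the primary component at $\theta$; in particular $\Ideal{z_1 - \theta_1, \dots, z_s - \theta_s}^{N_\theta} \subseteq Q_\theta$ for some $N_\theta$. Setting $\cQ_\theta' := Q_\theta^{\perp_\theta}$ produces a finite dimensional $D$--invariant subspace, and biduality — the perfect pairing modulo $\Ideal{z_1 - \theta_1, \dots, z_s - \theta_s}^{N_\theta}$ — gives $Q_\theta = (\cQ_\theta')^{\perp_\theta} = \{ f \in \Pi : q(D) f(\theta) = 0, \, q \in \cQ_\theta' \}$, so that intersecting over $\theta$ recovers (\ref{eq:GroebnerZeroDim}); the dimension formula follows once more from the Chinese Remainder Theorem together with $\dim \Pi / Q_\theta = \dim \cQ_\theta'$.

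The step I expect to need the most care is the duality bookkeeping behind the first paragraph. The adjunction identity itself is routine, but one must be attentive that $\langle \cdot, \cdot \rangle_\theta$, although non-degenerate on the monomial basis, becomes a genuinely \emph{perfect} pairing only after passing to the finite dimensional quotients $\Pi / \Ideal{z_1 - \theta_1, \dots, z_s - \theta_s}^N$ paired against degree--bounded spaces, and it is precisely this perfectness that licences the identities $W = (W^{\perp_\theta})^{\perp_\theta}$ and $\dim \Pi / W^{\perp_\theta} = \dim W$ invoked in both implications. The remaining ingredients — primary decomposition in an Artinian ring and the Chinese Remainder Theorem — are standard commutative algebra, and I would simply cite them.
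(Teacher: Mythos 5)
The paper does not prove this statement at all: it is quoted as a classical theorem of Gr\"obner and simply cited to \cite{groebner37:_ueber_macaul_system_bedeut_theor_differ_koeff}, so there is no in-paper argument to compare against. Your proof is a correct, self-contained reconstruction of the standard route, namely Macaulay's apolarity/inverse-system duality: the adjunction $\langle (z_j-\theta_j)f,q\rangle_\theta=\langle f,\partial_j q\rangle_\theta$ is exactly the identity that exchanges ideals with $D$-invariant subspaces, and the combination of biduality on the truncations $\Pi/\Ideal{z-\theta}^N$ with primary decomposition and the Chinese Remainder Theorem is how this theorem is proved in the modern literature (e.g.\ in the multiplicity-structure papers the author cites). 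Two small points deserve a touch more care than your write-up gives them. First, the pairing is diagonal only with respect to the \emph{mixed} bases $\{(z-\theta)^\alpha\}$ in the first slot against the plain monomials $\{z^\beta\}$ in the second (one computes $\langle (z-\theta)^\alpha,z^\beta\rangle_\theta=\alpha!\,\delta_{\alpha\beta}$); if both slots use the shifted basis the matrix is merely triangular with nonzero diagonal --- either way nondegeneracy on the degree-truncated quotients follows, so nothing breaks, but ``diagonal in the monomial basis centred at $\theta$'' is not literally accurate. Second, the ``if'' direction is false without assuming each $\cQ_\theta'$ finite dimensional (take $\cQ_\theta'=\CC[z_1]\subset\CC[z_1,z_2]$, whose orthogonal is the one-dimensional ideal $\Ideal{z_2-\theta_2}$); the theorem's dimension formula makes this assumption implicit, and you do invoke it, but it is worth stating explicitly since the theorem as printed omits it.
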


\noindent
As a consequence, it can easily be shown that the \emph{Hermite
  interpolation problem}
\begin{equation}
  \label{eq:HermiteIntProb}
  q(D) f (\theta) = 0, \qquad q \in Q_\theta', \quad \theta \in \Theta,
\end{equation}
where $Q_\theta'$ is a basis of $\cQ_\Theta'$, has a unique solution
in $\Pi / I$, hence the functionals in (\ref{eq:HermiteIntProb}) are
the natural dual functionals for $\Pi / I$. Also note that this
Hermite interpolation problem is an \emph{ideal interpolation} in the
sense of \cite{Birkhoff79,boor05:_ideal}.

Finally, we recall the operator
\begin{equation}
  \label{eq:LDef}
  L : \Pi \to \Pi, \qquad f \mapsto \sum_{\alpha \in \NN_0^s}
  \frac1{\alpha!} ( \tau - I )^\alpha f(0) \, (\cdot)^\alpha,
\end{equation}
from \cite{Sauer16:_kernels,Sauer2017:_Reconstruction}. With the
\emph{Pochhammer symbols} or \emph{falling factorials}
\cite{GrahamKnuthPatashnik98}, defined as 
$$
(\cdot)_\alpha := \prod_{j=1}^s \prod_{\beta_j = 0}^{\alpha_j} \left( (\cdot)_j -
\beta_j \right), \qquad \alpha \in \NN_0^s,
$$
its inverse can be written explicitly as
\begin{equation}
  \label{eq:L-1Def}
  L^{-1} f = \sum_{\alpha \in \NN_0^s}
  \frac1{\alpha!} \frac{\partial^{|\alpha|} f}{\partial x^\alpha} (0)
  \, (\cdot)_\alpha
\end{equation}
The operator allows to switch between shift invariant and
$D$--invariant polynomial subspaces.

\begin{proposition}[\cite{Sauer2017:_Reconstruction},
  Proposition~1]\label{P:DLShiftInvar}
  A subspace $\cQ$ of $\Pi$ is shift invariant if and only if $L\cQ$
  is $D$--invariant.
\end{proposition}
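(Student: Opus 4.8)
The plan is to reduce both invariance properties to closedness under a single family of $s$ operators, and then to move between the two families using that $L$ turns forward differences into partial derivatives. Throughout, write $\partial_j := \partial/\partial x_j$ and $\Delta_j := \tau_j - I$.

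First I would record two elementary reductions. A linear subspace of $\Pi$ is closed under $\tau_j = I + \Delta_j$ if and only if it is closed under $\Delta_j$, and closedness under all the $\tau_j$ is the same as closedness under every $\tau^\alpha$, $\alpha \in \NN_0^s$; hence $\cQ$ is shift invariant $\iff$ $\Delta_j \cQ \subseteq \cQ$ for $j = 1, \dots, s$. Symmetrically, since every $p(D)$ is a polynomial expression in $\partial_1, \dots, \partial_s$ while $\partial_j = p(D)$ for $p(z) = z_j$, a subspace $\mathcal{R} \subseteq \Pi$ is $D$-invariant $\iff$ $\partial_j \mathcal{R} \subseteq \mathcal{R}$ for $j = 1, \dots, s$.

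Next, and this is the heart of the matter, I would establish the intertwining identity
$$
\partial_j \circ L = L \circ \Delta_j, \qquad j = 1, \dots, s .
$$
Since $\Delta_j$ strictly lowers the total degree, $(\tau - I)^\alpha f$ vanishes once $|\alpha| > \deg f$, so the series (\ref{eq:LDef}) terminates on every polynomial and $L$ is a well-defined linear endomorphism of $\Pi$; by (\ref{eq:L-1Def}) it is invertible. Reading (\ref{eq:L-1Def}) on a monomial shows $L^{-1}(\cdot)^\beta = (\cdot)_\beta$, as only the term $\alpha = \beta$ survives the evaluation at $0$, so $L(\cdot)_\alpha = (\cdot)^\alpha$ for all $\alpha \in \NN_0^s$. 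Combining this with the standard falling-factorial identity $\Delta_j (\cdot)_\alpha = \alpha_j (\cdot)_{\alpha - \epsilon_j}$ and with $\partial_j (\cdot)^\alpha = \alpha_j (\cdot)^{\alpha - \epsilon_j}$ gives $L\Delta_j(\cdot)_\alpha = \alpha_j(\cdot)^{\alpha-\epsilon_j} = \partial_j L(\cdot)_\alpha$, and since the falling factorials form a basis of $\Pi$ the two linear operators coincide. (Iterating would also yield $p(D)L = L\,p(\tau - I)$ for every $p \in \Pi$, which is not needed here.)

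Finally I would assemble the pieces: for $\cQ \subseteq \Pi$ put $\mathcal{R} := L\cQ$; by the intertwining identity $\partial_j \mathcal{R} = L \Delta_j \cQ$ for each $j$, and since $L$ is bijective this is contained in $L\cQ = \mathcal{R}$ precisely when $\Delta_j \cQ \subseteq \cQ$. Requiring this for all $j$ and invoking the two reductions, the left-hand statement is exactly ``$L\cQ$ is $D$-invariant'' and the right-hand one is exactly ``$\cQ$ is shift invariant'', giving the equivalence. I expect the single point requiring care to be the intertwining identity, specifically the fact that $L$ carries the falling-factorial basis onto the monomial basis — this is what converts the difference operators $\tau_j - I$ into the derivatives $\partial_j$; everything else is linear bookkeeping. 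If one preferred not to use falling factorials, $\partial_j L = L(\tau_j - I)$ could instead be verified on monomials directly from (\ref{eq:LDef}) at the cost of a short binomial-coefficient computation.
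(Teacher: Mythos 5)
Your argument is correct. Note, however, that the paper does not prove this proposition at all: it is imported verbatim from \cite{Sauer2017:_Reconstruction} (Proposition~1 there), so there is no in-paper proof to compare against. What you supply is a complete, self-contained justification, and the route you take --- reducing shift invariance to closedness under the differences $\tau_j - I$, reducing $D$-invariance to closedness under the $\partial_j$, and then verifying the intertwining relation $\partial_j L = L(\tau_j - I)$ on the falling-factorial basis via $L(\cdot)_\alpha = (\cdot)^\alpha$ --- is exactly the natural mechanism behind the statement; it is consistent with the explicit formulas (\ref{eq:LDef}) and (\ref{eq:L-1Def}) given in the paper, and your check that $L^{-1}(\cdot)^\beta = (\cdot)_\beta$ is the right way to see that $L$ carries one basis onto the other. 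One small point worth flagging: the paper's displayed definition of $(\cdot)_\alpha$ runs the inner product over $\beta_j = 0, \dots, \alpha_j$, which would give $\alpha_j + 1$ factors per variable; the intended (standard) falling factorial has $\alpha_j$ factors, and it is this corrected version for which the identity $\Delta_j (\cdot)_\alpha = \alpha_j (\cdot)_{\alpha - \epsilon_j}$ that you invoke actually holds, and which makes (\ref{eq:L-1Def}) the genuine inverse of (\ref{eq:LDef}). Your proof implicitly uses the correct convention, so nothing breaks, but it is worth stating that you are doing so.
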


\begin{proof}[Proof of Thorem~\ref{T:finrankIdealChar},
  (\ref{it:finrankIdealChar2}) $\Rightarrow$
  (\ref{it:finrankIdealChar3}) $\Rightarrow$
  (\ref{it:finrankIdealChar4})]
  The zero dimensional ideal $I$ with Gr\"obner basis $G$ defines
  a system of homogeneous \emph{difference equations} via
  \begin{equation}
    \label{eq:HomogenDiffEq}
    0 = f \star g = \sum_{\beta \in \ZZ^s} f(\cdot + \beta) \, g(\beta)
    = \sum_{\beta \in \ZZ^s} f(\cdot - \beta) \, g(-\beta)
    = f * g( - \cdot ), \qquad \hat g \in G,    
  \end{equation}
  which is solved by $\cS (f)$
  Let $\Theta$ and $\cQ_\theta' \subset \Pi$, $\theta \in \Theta$,
  denote the common zeros of $I$ and their multiplicities where
  Corollary~\ref{C:KernIdealZero} ensures that $\Theta \subset \left( \CC
    \setminus \{0\} \right)^s$.
  It has been shown in \cite{Sauer16:_kernels} that all solutions of
  the homogeneous difference equation (\ref{eq:HomogenDiffEq}), or,
  equivalently, all common kernels of the convolution operators
  defined by $g(-\cdot)$, $g \in G$, are of the form
  $$
  f = \sum_{\theta \in \Theta} q_\theta \, \theta^{(\cdot)}, \qquad
  q_\theta \in Q_\theta :=  L^{-1} \sigma_\theta \cQ_\theta', \qquad
  \theta \in \Theta,
  $$
  where
  $$
  \Theta = V \left( \Ideal{\widehat{g(-\cdot)} : \hat g \in G}
  \right)^{-1} = V \left( \Ideal{G} \right),
  $$
  and $\sigma_\theta : f = f \left( \theta_1 (\cdot)_1,\dots,\theta_s
    (\cdot)_s \right)$ denotes the dilation by the diagonal matrix
  formed by $\theta$.
  Writing $\Theta = e^\Omega$ and taking into account that
  $\cQ_\theta$ is shift invariant due to
  Proposition~\ref{P:DLShiftInvar}, gives the desired  
  representation. (\ref{it:finrankIdealChar3}) $\Rightarrow$
  (\ref{it:finrankIdealChar4}) is a direct consequence.
\end{proof}

\noindent
To complete the proof of Theorem~\ref{T:finrankIdealChar}, we recall
from \cite{Sauer2017:_Reconstruction} the factorization theorem for
Hankel operators associated to functions of ``Prony form''.

\begin{theorem}[\cite{Sauer2017:_Reconstruction},
  Theorem~5]\label{T:PronyFactGeneral} 
  If $f$ is of the form
  \begin{equation}
    \label{eq:PronyFormGeneral}
    f(x) = \sum_{\omega \in \Omega} f_\omega (x) \, e^{\omega^T x},
    \qquad f_\omega \in \Pi \setminus \{ 0 \}, \quad \omega \in \left(
      \RR + i \TT \right)^s,  
  \end{equation}
  then there exist $D$--invariant spaces $\cQ_\omega \subset \Pi$ and
  a nonsingular block diagonal matrix
  \begin{equation}
    \label{eq:PronyFactGeneral1}
    F_{\Omega,\cQ_\Omega} :=
    \diag \begin{pmatrix}
      F_\omega \in \CC^{\dim \cQ_\omega \times \dim \cQ_\omega} :
      \omega \in \Omega
    \end{pmatrix}
  \end{equation}
  such that
  \begin{equation}
    \label{eq:PronyFactGeneral2}
    H_{A,B} (f) = V \left( e^\Omega,\cQ_\Omega; A \right)^T
    F_{\Omega,\cQ_\Omega} \, V \left( e^\Omega,\cQ_\Omega; B \right),
    \qquad A,B \subset \NN_0^s.
  \end{equation}
\end{theorem}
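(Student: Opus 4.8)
The plan is to peel off the frequencies one at a time and, for a single frequency, to push the claimed factorization through the operator $L$ and a diagonal dilation, reducing it to an elementary separation--of--variables identity for a polynomial. By linearity $H_{A,B}(f) = \sum_{\omega \in \Omega} H_{A,B}( f_\omega \, e^{\omega^T \cdot} )$, so it suffices to construct, for each $\omega$, a $D$--invariant subspace $\cQ_\omega \subset \Pi$ with basis $Q_\omega$ and a nonsingular $F_\omega \in \CC^{\dim \cQ_\omega \times \dim \cQ_\omega}$ with
\[
  H_{A,B}( f_\omega \, e^{\omega^T \cdot} ) = V( e^\omega, Q_\omega; A )^T \, F_\omega \, V( e^\omega, Q_\omega; B ), \qquad A,B \subseteq \NN_0^s.
\]
Since the nodes $e^\omega$, $\omega \in \Omega$, are pairwise distinct, stacking the row blocks $V( e^\omega, Q_\omega; A )$ over $\omega$ assembles $V( e^\Omega, Q_\Omega; A )$ as in (\ref{eq:HermiteVandermonde}) and the $F_\omega$ assemble to the block diagonal matrix (\ref{eq:PronyFactGeneral1}); block multiplication then yields (\ref{eq:PronyFactGeneral2}), and $F_{\Omega,\cQ_\Omega}$ is nonsingular precisely when every $F_\omega$ is.

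Fix $\omega$, put $\theta := e^\omega \in (\CC \setminus \{0\})^s$ and $p := f_\omega \in \Pi \setminus \{0\}$. Checking it on monomials $q = (\cdot)^\mu$ and extending linearly by (\ref{eq:L-1Def}) gives the identity
\[
  ( q(D)(\cdot)^\alpha )(\theta) = \theta^\alpha \, ( L^{-1} \sigma_\theta^{-1} q )(\alpha), \qquad q \in \Pi, \ \alpha \in \NN_0^s,
\]
where $\sigma_\theta$ is the diagonal dilation introduced in the proof that (\ref{it:finrankIdealChar2}) implies (\ref{it:finrankIdealChar3}). Since the $(\alpha,\beta)$ entry of $H_{A,B}( f_\omega \, e^{\omega^T \cdot} )$ equals $p(\alpha+\beta) \, \theta^\alpha \theta^\beta$, the factorization for this $\omega$ is, after cancelling $\theta^\alpha \theta^\beta$ and setting $\cQ_\omega := \sigma_\theta L \, \cS(f_\omega)$ with basis $Q_\omega := \sigma_\theta L \, R_\omega$ for a chosen basis $R_\omega$ of $\cS(f_\omega)$, equivalent to exhibiting a nonsingular matrix $F_\omega$ with
\[
  p(\alpha + \beta) = \sum_{r,r' \in R_\omega} r(\alpha) \, (F_\omega)_{r,r'} \, r'(\beta), \qquad \alpha,\beta \in \NN_0^s.
\]
By Proposition~\ref{P:DLShiftInvar}, and since a diagonal dilation preserves $D$--invariance, $\cQ_\omega = \sigma_\theta L \, \cS(f_\omega)$ is $D$--invariant because $\cS(f_\omega)$ is shift invariant; this is how the structural hypothesis on $\cQ_\omega$ is met.

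It remains to prove the polynomial identity. The space $\cS(p)$, the smallest shift invariant subspace of $\Pi$ containing $p$, is finite dimensional; being shift invariant it is also $D$--invariant and equals $\Span \{ \partial^\gamma p : \gamma \in \NN_0^s \}$. Fix a basis $R_\omega = \{ r_1, \dots, r_m \}$ of $\cS(p)$, expand $p(x+y) = \sum_\gamma \frac{1}{\gamma!} (\partial^\gamma p)(y) \, x^\gamma$, and write each $\partial^\gamma p$ in the basis $R_\omega$; this gives $p(x+y) = \sum_{i=1}^m s_i(x) \, r_i(y)$ with $s_i \in \Pi$. Differentiating this in $y$ and evaluating at $y = 0$ yields $\partial^\gamma p = \sum_i (\partial^\gamma r_i)(0) \, s_i \in \Span \{ s_1, \dots, s_m \}$ for every $\gamma$, so $\cS(p) \subseteq \Span \{ s_i \}$; as there are only $m = \dim \cS(p)$ of the $s_i$, they too form a basis of $\cS(p)$, and the invertible matrix expressing the $s_i$ in terms of the $r_i$ is the desired $F_\omega$. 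In particular $\dim \cQ_\omega = \dim \cS(f_\omega)$, consistently with $\sum_\omega \dim \cQ_\omega = \dim \cS(f) = \rank H(f)$ by Theorems~\ref{T:ranksis} and~\ref{T:rankDefsEquiv}.

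The proof is not deep; the main obstacle is keeping the bookkeeping straight. One must get the falling--factorial identity for $( q(D)(\cdot)^\alpha )(\theta)$ and the direction of $L$ versus $L^{-1}$ exactly right, use that $\theta_j \neq 0$ so that $\sigma_\theta^{-1}$ is legitimate, and --- most delicately --- track the passage, via Proposition~\ref{P:DLShiftInvar}, from the object that is naturally shift invariant, $\cS(p)$, to the $D$--invariant space $\cQ_\omega$ that actually indexes the Hermite--Vandermonde matrix (\ref{eq:HermiteVandermonde}). Once that correspondence is set up, nonsingularity of $F_{\Omega,\cQ_\Omega}$ follows blockwise from the change of basis between $\{ s_i \}$ and $\{ r_i \}$, the global statement using only the distinctness of the nodes $e^\omega$.
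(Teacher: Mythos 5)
Your proof is correct, but there is nothing in this paper to compare it against line by line: Theorem~\ref{T:PronyFactGeneral} is imported verbatim from \cite{Sauer2017:_Reconstruction} (Theorem~5) and is not proved here; the only in-text analogue is the two-line entrywise computation for the simple case $f_\omega\in\CC$ preceding the first factorization corollary. Your argument is a sound, self-contained generalization of that computation, built from exactly the toolkit the paper assembles for other purposes ($L$, the dilation $\sigma_\theta$, Proposition~\ref{P:DLShiftInvar}). The two load-bearing steps both check out: (i) the identity $\left( q(D)(\cdot)^\alpha \right)(\theta) = \theta^\alpha \left( L^{-1}\sigma_\theta^{-1} q \right)(\alpha)$ holds on monomials $q=(\cdot)^\mu$, both sides being $\frac{\alpha!}{(\alpha-\mu)!}\,\theta^{\alpha-\mu}$ --- provided one reads the paper's falling factorial with upper limit $\alpha_j-1$ rather than the printed $\alpha_j$, an evident typo that your computation silently corrects; and (ii) the separation identity $p(x+y)=\sum_i s_i(x)\,r_i(y)$ with $\{s_i\}$ again a basis of $\Span\{\partial^\gamma p\}$, which produces the nonsingular $F_\omega$ as a change-of-basis matrix. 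The cancellation of $\theta^{\alpha+\beta}$ is legitimate since $\theta=e^\omega$ has no vanishing coordinate, and the blockwise assembly over the distinct nodes $e^\omega$ is routine. The one spot deserving an explicit sentence is the claim $\cS(p)=\Span\{\partial^\gamma p : \gamma\in\NN_0^s\}$ for a polynomial $p$: the inclusion of integer translates in the span of derivatives is Taylor's formula, while the reverse inclusion needs the observation that $t\mapsto p(\cdot+t)$ is polynomial in $t$ of bounded degree, so its values on a finite integer grid already determine all $\partial^\gamma p$ by interpolation in $t$; without this, ``shift invariant'' in the paper's sense (integer shifts) does not obviously yield the derivative span. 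With that sentence added, the proof is complete, and it also recovers $\dim\cQ_\omega=\dim\cS(f_\omega)$ consistently with Theorems~\ref{T:rankDefsEquiv} and~\ref{T:ranksis}.
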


\noindent
This allows us to
eventually complete the proof of Theorem~\ref{T:finrankIdealChar}.

\begin{proof}[Proof of Thorem~\ref{T:finrankIdealChar},
  (\ref{it:finrankIdealChar4}) $\Rightarrow$
  (\ref{it:finrankIdealChar1})]
  Statement (\ref{it:finrankIdealChar4}) means that $f$ is of the form
  (\ref{eq:PronyFormGeneral}) and therefore $H_{A,B} (f)$ factorizes
  as in (\ref{eq:PronyFactGeneral2}) for any choice of $A,B \subset
  \NN_0^s$. This implies that
  $$
  \rank H_{A,B} (f) \le \max \left\{ \rank V \left(
      e^\Omega,\cQ_\Omega; A \right), \rank V \left(
      e^\Omega,\cQ_\Omega; B \right), \rank
    F_{\Omega,\cQ_\Omega} \right\},
  $$
  and since
  $$
  \rank V \left( e^\Omega,\cQ_\Omega; A \right) \le \sum_{\omega \in
    \Omega} \dim \cQ_\omega = \rank F_{\Omega,\cQ_\Omega}, \qquad A
  \subseteq \NN_0^s,
  $$
  with equality if and only if $(\cdot)^A$ is an interpolation space
  for the Hermite interpolation problem~(\ref{eq:HermiteIntProb}), it
  follows that $H_{A,B} (f) \le \rank F_{\Omega,\cQ_\Omega}$, again
  with equality iff $(\cdot)^A$ and $(\cdot)^B$ are interpolation
  spaces. Consequently,
  $$
  \rank H(f) = \rank F_{\Omega,\cQ_\Omega} = \sum_{\omega \in
    \Omega} \dim \cQ_\omega < \infty.
  $$
\end{proof}

\noindent
This also proves the factorization theorem,
Corollary~\ref{C:FactorThm}: the necessity of the factorization
follows for a finite rank follows from
Theorem~\ref{T:finrankIdealChar}~(\ref{it:finrankIdealChar4}) and
Theorem~\ref{T:PronyFactGeneral}, its sufficiency was exactly the
point in the proof above.

And we can prove our last remaining result of Section~\ref{sec:results}.

\begin{proof}[Proof of Theorem~\ref{T:RankStable}]
  Since $H(f)$ defines a Hermite interpolation problem with $\rank
  H(f)$ conditions, it follows that $(\cdot)^{\Upsilon_k}$ and
  $(\cdot)^{\Gamma_k}$ admit Hermite 
  interpolation, cf. \cite{Sauer2017:_Reconstruction}. This yields
  that
  $$
  \rank V \left( e^\Omega,\cQ_\Omega; \Upsilon_k \right) = \rank V \left(
    e^\Omega,\cQ_\Omega; \Gamma_k \right) = \rank H(f),
  $$
  hence (\ref{eq:RankStable}).
\end{proof}

\section{Conclusion}
\label{sec:Conclude}
We have seen that Hankel operators of finite rank defined on sequences
are practically equivalent to shift invariant subspaces and to zero
dimensional annihilating ideals where even the rank of the operator,
the dimension of the shift invariant space and the codimension of the
ideal coincide. The connection between these notions is Prony's
problem in its generalized form with polynomial coefficients.


\bibliographystyle{amsplain}
\bibliography{../bibls/4all}

\end{document}